\numberwithin{equation}{section}
\newtheorem{claim}{claim}[section]
\newtheorem{theorem}[claim]{Theorem}
\newtheorem{lemma}[claim]{Lemma}
\newtheorem{proposition}[claim]{Proposition}
\newtheorem{corollary}[claim]{Corollary}
\newtheorem{conjecture}[claim]{Conjecture}
\newtheorem{remark}[claim]{Remark}
\let\infp\relax \DeclareMathOperator*\infp{\vphantom{p}inf}
\newcommand{\C}{\mathbb{C}}
\newcommand{\ds}{\Omega_n}
\newcommand{\Pn}{\mathcal{P}_n}
\newcommand{\conv}{\text{Conv}}
\newcommand{\F}{{\operatorname{F}}}
\newcommand{\lp}[1][p]{{\ell_n^{#1}\to\ell_n^{#1}}}
\author{Ludovick Bouthat, Javad Mashreghi and Frédéric Morneau-Guérin}
  \title{On the Geometry of the Birkhoff Polytope \\ {I. The operator \texorpdfstring{$\bm{\ell^p_n}$}{ℓp}-norms}}
\begin{document}

\begin{abstract}
{ The geometry of the Birkhoff polytope, i.e., the compact convex set of all $n \times n$ doubly stochastic matrices,  has been an active subject of research. While its faces, edges and facets as well as its volume have been intensely studied, other geometric characteristics such as the center and radius were left off, despite their natural uses in some areas of mathematics. In this paper, we completely characterize the Chebyshev center and the Chebyshev radius of the Birkhoff polytope associated with the metrics induced by the operator $\ell^p_n$-norms for the range $1 \leq p \leq \infty$.}
\end{abstract}

\keywords{Doubly stochastic matrices, Birkhoff polytope, Chebyshev center, Chebyshev radius}

\maketitle


\section{Introduction}

A square matrix $D= [d_{ij}]$ is said to be \textit{doubly stochastic} if every entry of $D$ is non-negative and if each row and each column of $D$ sums up to 1, i.e.,
$$
d_{ij} \geq 0 ~~\quad\&~~\quad \sum\limits_{i=1}^n d_{ij}= \sum\limits_{j=1}^n d_{ij}=1,
$$
for all $i, j = 1,2, \dots, n$. Doubly stochastic matrices appear naturally in many different mathematical contexts such as the theory of majorization \cite{Marshall2011}.

Let $\ds$ denote the set of $n\times n$ doubly stochastic matrices. It is well known that $\ds$ is a semigroup with respect to matrix multiplication and that it is a convex polytope (i.e., a compact convex set with a finite number of extreme points) in the Euclidean space of dimension $n^2$. Moreover, it was shown by Birkhoff \cite{Birkhoff1946} that the extreme points of $\ds$ are precisely the $n\times n$ permutation matrices. We write $\ds=\conv(\Pn)$, where  $\Pn$ denotes the set of $n\times n$ permutation matrices and $\conv(\cdot)$ designates the convex hull of the set that is operated on. More specifically, each $D\in\ds$ admits a (not necessarily unique) \emph{Birkhoff decomposition} $D=\sum_{i=1}^r \alpha_i P_i$, where $P_i\in\Pn$, $\alpha_i \geq 0$, and $\sum_{i=1}^r \alpha_i=1$. Due to this characterization, $\ds$ is sometimes referred to as the \emph{Birkhoff polytope}. 

The geometry of the Birkhoff polytope has been an active subject of research for more than half a century. For instance, in 1977, in a series of four papers, Brualdi and Gibson \cite{BrualdiGibson4,BrualdiGibson1,BrualdiGibson2,BrualdiGibson3} studied the Euclidean geometry structure of $\ds$. In particular, they described the faces, the edges and the facets of $\ds$. In 1996, Billera and Sarangarajan \cite{BilleraSarangarajan1996} pursued this line of study, while also considering two other related polytopes. Then, from 1999 up to 2016, several articles studying the volume of $\ds$ were published. In particular, formulas for the volume of $\ds$ were given by Sturmfels \cite{Sturmfels1997} for $n\leq7$,  by Chan and Robbins \cite{ChanRobbins1999} for $n=8$, and by Beck and Pixton \cite{BeckPixton2003} for $n=9,10$. As for the case of $10\leq n \leq 15$, various estimates were obtained in 2014 by Emiris and Fisikopoulos \cite{EmirisFisikopoulos2014}, and in 2016 by Cousins and Vempala. \cite{CousinsVempala2016}. Meanwhile, De Loera, Liu and Yoshida \cite{DeLoeraLiuYoshida2009} provided an explicit combinatorial formula for the volume of $\ds$ in 2009.


The present series of articles is in line with above-mentioned papers in that the problems addressed are centered around studying the geometry of the Birkhoff polytope. Our main point of interest are the Chebyshev center and the Chebyshev radius of $\ds$. Along the way, we ponder about classical problems such as finding the radius of a minimal bounding ball for the Birkhoff polytope and the smallest enclosing ball problem. Clearly, these notions heavily depend on the metric with which $\ds$ is equipped. As we shall see, there are some advantages of considering \emph{permutation-invariant} submultiplicative norm on $\ds$. As such, we shall consider in the present article the case of the \emph{operator norms from $\ell^p_n$ to $\ell^p_n$} ($1 \leq p \leq \infty$). In the second paper of this series, the case of the \emph{Schatten $p$-norms} ($1\leq p <\infty$) is addressed. 

This paper is structured as follows. In Section \ref{sec - def}, we establish the preliminaries needed for later use. More precisely, we recall some standard definitions in Sections \ref{subsec - def} and \ref{subsec - prop}, we present a few elementary results on the spectrum of doubly stochastic matrices in Section \ref{sec - prop}. In Section \ref{Sec: Remarks}, we give a summary of the main results contained in the present paper. In Section \ref{sec - size}, we derive values for the minimum and maximum distance of an element of the Birkhoff polytope from the origin. In Section \ref{sec - ball}, we study the minimal bounding ball of the Birkhoff polytope. Finally, in Section \ref{sec - Chebyshev}, we study the Chebyshev center and the Chebyshev radius of $\ds$, both in a general setting and in the case of the operator norms from $\ell^p_n$ to $\ell^p_n$ ($1 \leq p \leq \infty$).


\section{Definitions, Properties and Preliminary results} \label{sec - def}


\subsection{Basic definitions}
\label{subsec - def}

In this section, we first recall the definition of some standard notions that play a major role in what follows. Secondly, we state some properties about doubly stochastic matrices, which we use repeatedly in the various demonstrations throughout this article. Finally, we look at some of the remarkable features of a particular doubly stochastic matrix that are used extensively in the rest of the paper.

\subsubsection*{Vector $p$-norms}

Recall that for $p \geq 1$, the \textit{$p$-norm} of a given vector ${x = (x_1, \dots, x_n)}$ is defined by
\[
\|x\|_p \,=\, \left( \sum_{k=1}^{n} |x_k|^p \right)^{\!\!1/p},
\]
and the $\infty$-norm by
\[
\|x\|_\infty \,=\, \max\{ |x_1|, \dots, |x_n|\}.
\]
When $\mathbb{C}^n$ is equipped with the vector $p$-norm, it is customary to denote it by $\ell^p_n(\mathbb{C})$, or by $\ell^p_n$ for short.

\subsubsection*{Operator norms induced by the vector $p$-norms}

Any $n \times n$ matrix $A$ can be interpreted as an operator from $\ell^p_n$ to $\ell^p_n$, in which case its operator norm is given by
\[
\|A\|_{\lp} \,:=\, \sup_{x\neq 0}{\frac {\|Ax\|_p}{\|x\|_p}}.
\]
We recall that for all $1\leq p\leq \infty$, we have $\|B\|_{\lp} = \|B^*\|_{\lp[q]}$, where $*$ denotes the Hermitian conjugate and $q$ is the H\"older conjugate of $p$, i.e.,  $\tfrac{1}{p} + \tfrac{1}{q} = 1$  \cite[p.\,357]{HornJohnson2013}.

Since the sets $\ds$ and $\Pn$ are invariant under the conjugation action, many operators considered below have the same norm, whether we see them as $\ell^p_n \to \ell^p_n$ or as $\ell^q_n \to \ell^q_n$ mappings. Consequently, when such situation arises, for the sake of brevity we shall only consider the case $1\leq p \leq 2$.







\subsection{Useful properties of norms}
\label{subsec - prop}

We now turn our attention to two specific properties that norms on a given vector space of matrices may possess.

\subsubsection*{Submultiplicativity}

An important feature distinguishing matrices from rearranged vectors is the matrix multiplication. The way in which a given norm behaves with respect to matrix multiplication is therefore of particular interest.

A norm $\|\cdot\| : M_{n}(\C) \rightarrow [0,\infty)$ is said to be \textit{submultiplicative} if, for all $A, B \in M_{n}(\C)$,
\[
\|AB\| \,\leq\, \|A\|\|B\|.
\]
Many authors reserve the terminology \textit{matrix norm} for submultiplicative norms.

Note that both the matrix norms induced by vector $p$-norms $(1 \leq p \leq \infty)$ and the Schatten $p$-norms $(p \geq 1$) are submultiplicative. As for an example of a norm on $M_{n}(\C)$ that is not submultiplicative, consider the maximum norm $\|A\|_{\max} := \max\limits_{i,j} |a_{ij}|$.

\subsubsection*{Permutation-invariant norms}\label{PINdef}

A norm $\|\cdot\| : M_{n}(\C) \rightarrow [0,\infty)$ is \textit{permutation-invariant} if for any permutation matrices $P, Q \in \Pn$ and any matrix $A \in M_{n}(\C)$, we have
\[
\|PAQ\| \,=\, \|A\|.
\]

It should be underlined that, the matrix norms induced by vector $p$-norms  $(1 \leq p \leq \infty)$, the Schatten $p$-norms $(p \geq 1)$ and the maximum norm are all permutation-invariant.


\subsection{Elementary properties of doubly stochastic matrices}
\label{sec - prop}

Let us recall two classical results on the theory of doubly stochastic matrices which are needed in our studies.
\begin{enumerate}[label=(\roman*)] 
    \item A doubly stochastic matrix always has the eigenvalue 1 corresponding to the eigenvector $e=(1,1,\dots,1)^\intercal$. All other eigenvalues are in absolute value smaller or equal to 1 \cite[Lemma 1]{Perfect1965}.
    %
    %
    \item\label{prop - conv-max} A convex real-valued function on $\Omega_n$ attains its maximum at a permutation matrix   \cite[Corollary 8.7.4]{HornJohnson2013}.
\end{enumerate}


\subsection{A special doubly stochastic matrix}
\label{subsec - assign}

The $n \times n$ matrix where every entry is equal to $1/n$, which we denoted by $J_n$, plays an important role in the whole theory.  This doubly stochastic matrix is special in a number of regards. Firstly, it acts as \textit{the absorbing element} for $\ds$. That is to say $DJ_n = J_n D = J_n$ for every $n \times n$ doubly stochastic matrix $D$. Secondly, as expressed by the following lemma, it is the \textit{isobarycenter} of $\Pn$.

\begin{lemma}\label{lem - bonus3}
The matrix $J_n$ is the uniform convex combination of all the $n \times n$ permutation matrices, i.e.,
\[
J_n\,=\,\frac{1}{n!} \sum_{P\in\Pn} P.
\]
\end{lemma}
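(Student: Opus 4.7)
My plan is to show that every entry of the matrix $M := \frac{1}{n!} \sum_{P \in \Pn} P$ equals $1/n$, from which Lemma \ref{lem - bonus3} follows.

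The most direct route is a counting argument. Identifying each $P \in \Pn$ with a unique permutation $\sigma \in S_n$ via $(P_\sigma)_{ij} = \delta_{i,\sigma(j)}$, I would fix a position $(i,j)$ and count the permutation matrices with a $1$ there. These correspond exactly to the permutations $\sigma \in S_n$ satisfying $\sigma(j) = i$, of which there are $(n-1)!$ (the remaining $n-1$ values of $\sigma$ can be assigned freely to the remaining $n-1$ indices). Hence
\[
M_{ij} \,=\, \frac{1}{n!}\sum_{P \in \Pn} P_{ij} \,=\, \frac{(n-1)!}{n!} \,=\, \frac{1}{n},
\]
which is precisely $(J_n)_{ij}$.

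As a sanity check and alternative presentation, one could instead argue by symmetry: since $\Pn$ is a group under matrix multiplication, for every $Q \in \Pn$ the map $P \mapsto QP$ (resp.\ $P \mapsto PQ$) is a bijection of $\Pn$ onto itself, so $QM = MQ = M$. Combined with the fact that $M$ is doubly stochastic (as a convex combination of elements of $\Omega_n$), the transitivity of the action of $\Pn$ on the rows and on the columns of $M$ forces all entries of $M$ to be equal, and the row-sum condition then pins this common value at $1/n$.

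There is no genuine obstacle here; the only small point to be careful about is the convention used to identify permutations with permutation matrices, but either convention yields the same count $(n-1)!$ of matrices having a $1$ in a given position, so the argument is unaffected.
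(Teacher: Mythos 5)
Your proof is correct. Your primary argument---fixing a position $(i,j)$ and counting the $(n-1)!$ permutations $\sigma$ with $\sigma(j)=i$, so that each entry of $\frac{1}{n!}\sum_{P\in\Pn}P$ equals $(n-1)!/n! = 1/n$---is a genuinely different route from the paper's, which never counts anything: the paper sets $D=\frac{1}{n!}\sum_{P\in\Pn}P$, uses the fact that $Q\Pn R=\Pn$ for all $Q,R\in\Pn$ to get $QDR=D$, concludes that all rows and columns of $D$ are identical, and then invokes double stochasticity to pin the common entry at $1/n$. Your ``sanity check'' paragraph is essentially that argument, so you have both in hand. The counting proof is more elementary and self-contained, giving the value of each entry directly; the symmetry proof is the one in the spirit of the rest of the paper, since the same invariance-plus-averaging device (a permutation-invariant object built by averaging over $\Pn$ must be a multiple of $J_n$) reappears in the treatment of Chebyshev centers, and it shows more generally that $J_n$ is the unique doubly stochastic matrix fixed by all two-sided permutation actions, independently of any enumeration of $S_n$. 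Your remark about the convention $(P_\sigma)_{ij}=\delta_{i,\sigma(j)}$ is well taken and indeed harmless.
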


\begin{proof}
Let $D$ be the doubly stochastic matrix given by the convex combination $\frac{1}{n!} \sum_{P\in \Pn} \!P$. Since the permutation group $\Pn$ is invariant under permutations on both sides (viz. $Q\Pn R=\Pn$ for all $Q$ and $R\in\Pn$),  for all $Q, R \in \Pn$, we have
\[
QDR \,=\, \frac{1}{n!} \sum_{P\in\Pn} QPR \,=\, \frac{1}{n!} \sum_{S\in\Pn} S \,=\, D.
\]
This means that $D$ is invariant with respect to the permutation of its rows and columns.  Hence, all rows (resp. columns) of $D$ are identical. Since $D$ is doubly stochastic, we conclude every entry of $D$ is equal to $1/n$.
\end{proof}

\section{Main results}\label{Sec: Remarks}

In this section, we summarize the main results contained in the present work. Note that the geometric notions that appear in this summary will be defined in due course.

The first main theorem concerns the Chebyshev center and radius of $\ds$ relative to a permutation-invariant norm on $M_n(\mathbb{R})$.

\begin{theorem}
    Let $\mathcal{R} \subseteq M_n(\mathbb{R})$ be a convex permutation-invariant constraint set and let $\|\cdot\|$ be a permutation-invariant norm on $M_n(\mathbb{R})$. If there exist a Chebyshev center $A$ of $\ds$ relative to the metric space $(M_n(\mathbb{R}),\|\cdot\|)$ and the constraint set $\mathcal{R}$, then the matrix $J_nAJ_n =  \big(\frac{1}{n}\sum_{i,j=1}^n a_{ij} \big)J_n$ is also a Chebyshev center of $\ds$ relative to the aforementioned metric space and constraint set. Moreover, the Chebyshev radius of $\ds$ in this setting is given by
    \[
    R_{\|\cdot \|}(\ds) \,=\,\|J_nAJ_n-I_n\| \,=\, \inf_{\!\!\substack{\alpha\in\mathbb{R}\\ \alpha J_n \in \mathcal{R}}\!}\|\alpha J_n-I_n\|
    \]
    and the infimum is attained by $\alpha = \frac{1}{n}\sum_{i,j=1}^n a_{ij}$.
\end{theorem}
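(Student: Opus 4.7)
The plan is to average $A$ over the two-sided action of $\Pn \times \Pn$. By Lemma~\ref{lem - bonus3},
$$J_n A J_n \,=\, \frac{1}{(n!)^2} \sum_{P,Q \in \Pn} P A Q,$$
which exhibits $J_nAJ_n$ as a convex combination of matrices $PAQ$. Since $\mathcal{R}$ is both permutation-invariant and convex, each $PAQ$ lies in $\mathcal{R}$, and therefore so does $J_nAJ_n$. A direct entrywise computation also shows $J_nAJ_n = \alpha J_n$ with $\alpha = \frac{1}{n}\sum_{i,j} a_{ij}$, which is the scalar form claimed in the theorem.

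The crux is then to verify that $J_nAJ_n$ is itself a Chebyshev center, that is,
$$\sup_{D \in \ds} \|J_n A J_n - D\| \,\le\, R_{\|\cdot\|}(\ds) \,=\, \sup_{D \in \ds} \|A - D\|.$$
Fixing $D$, I would write
$$J_n A J_n - D \,=\, \frac{1}{(n!)^2} \sum_{P,Q}(PAQ - D)$$
and apply the triangle inequality. For each term, the identity $PAQ - D = P(A - P^{-1}DQ^{-1})Q$ combined with permutation-invariance of $\|\cdot\|$ gives $\|PAQ - D\| = \|A - P^{-1}DQ^{-1}\|$, and since $\ds$ is closed under the map $D \mapsto P^{-1}DQ^{-1}$, this quantity is bounded by $R_{\|\cdot\|}(\ds)$. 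Averaging yields the desired inequality, so $J_nAJ_n$ is a Chebyshev center as well.

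For the explicit radius and the infimum formula, fix any scalar $\alpha$ with $\alpha J_n \in \mathcal{R}$ and consider the convex function $D \mapsto \|\alpha J_n - D\|$. Property (ii) of Section~\ref{sec - prop} tells us that its supremum over $\ds$ is attained at some permutation matrix $P$. Using the absorption identity $P^{-1} J_n = J_n$ together with permutation-invariance,
$$\|\alpha J_n - P\| \,=\, \|P^{-1}(\alpha J_n - P)\| \,=\, \|\alpha J_n - I_n\|,$$
independent of $P$. Thus every admissible $\alpha J_n$ is a candidate center whose maximum distance to $\ds$ is exactly $\|\alpha J_n - I_n\|$; minimizing over such $\alpha$ gives $R_{\|\cdot\|}(\ds)$, and the value is realized at $\alpha = \frac{1}{n}\sum_{i,j} a_{ij}$, recovering the center $J_nAJ_n$ constructed above.

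The main obstacle is conceptual rather than computational: the argument has to orchestrate three distinct permutation-invariance hypotheses (on the norm, on $\ds$, and on $\mathcal{R}$) so that the symmetrized candidate $J_nAJ_n$ simultaneously remains in $\mathcal{R}$ and collapses every distance calculation to the single quantity $\|\alpha J_n - I_n\|$. After that collapse, the reduction of the supremum from $\ds$ down to $\Pn$ via property (ii), and then from $\Pn$ down to a single representative via norm invariance, follows with essentially no additional work.
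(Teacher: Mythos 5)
Your proof is correct and follows essentially the same symmetrization strategy as the paper: average $A$ over the two-sided permutation action, identify the average with $J_nAJ_n=\alpha J_n$ via Lemma~\ref{lem - bonus3}, and collapse all distances to $\|\alpha J_n-I_n\|$ using permutation-invariance, the absorption identity $J_nP=J_n$, and the reduction of the supremum over $\ds$ to $\Pn$ via property~\ref{prop - conv-max}. The only (harmless) deviation is that you verify directly, by a triangle inequality on $\|J_nAJ_n-D\|$, that the averaged matrix has worst-case distance at most $R_{\|\cdot\|}(\ds)$, whereas the paper routes this step through Lemma~\ref{lem - bonus1} (each $PAQ$ is a center, and convex combinations of centers are centers); the two arguments carry the same content.
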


The second main theorem is in fact a particular case of the previous one, the main difference being that the following is in the particular case where the constraint set is precisely the set of $n\times n$ doubly stochastic matrices.

\begin{corollary}
    If $\|\cdot\|$ is a permutation-invariant norm on $M_n(\mathbb{R})$, then the special doubly stochastic matrix $J_n$ is a Chebyshev center of $\ds$ relative to the metric space $ (\ds,\|\cdot\|)$. Moreover, the associated Chebyshev radius is given by $R_{\|\cdot\|}(\ds)=\|J_n-I_n\|$.
\end{corollary}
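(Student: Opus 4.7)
The plan is to deduce the corollary as a direct specialization of the preceding theorem, taking the constraint set $\mathcal{R}$ to be the Birkhoff polytope $\ds$ itself. The first step is to check that the hypotheses are satisfied: $\ds$ is convex and permutation-invariant (since $Q\ds R = \ds$ for any $Q, R \in \Pn$), and $\|\cdot\|$ is permutation-invariant by assumption. One then needs to verify that a Chebyshev center actually exists in this setting. This follows from a standard compactness argument: $\ds$ is compact in $M_n(\R)$, so the function $D \mapsto \sup_{E\in\ds}\|D-E\|$ (a supremum over a compact set of continuous functions) is itself continuous on $\ds$, hence attains its infimum somewhere in $\ds$.

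With existence secured, let $A$ be any such Chebyshev center. By the preceding theorem, $J_n A J_n$ is also a Chebyshev center. The next step is to observe that this new Chebyshev center is exactly $J_n$. Two equivalent justifications are available: either invoke the absorbing property $J_n D = D J_n = J_n$ recalled in Section~\ref{subsec - assign}, which immediately gives $J_n A J_n = J_n$; or compute $\tfrac{1}{n}\sum_{i,j} a_{ij} = \tfrac{1}{n}\cdot n = 1$ using the fact that row and column sums of $A$ equal $1$, and apply the explicit formula from the theorem to obtain $J_n A J_n = 1 \cdot J_n = J_n$.

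For the Chebyshev radius, the theorem provides the expression
\[
R_{\|\cdot\|}(\ds) \,=\, \inf_{\substack{\alpha\in\R\\ \alpha J_n \in \ds}} \|\alpha J_n - I_n\|.
\]
The constraint $\alpha J_n \in \ds$ forces the row sums of $\alpha J_n$ to equal $1$, which means $\alpha = 1$. Hence the infimum reduces to the single value $\|J_n - I_n\|$, completing the argument.

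No step really poses a serious obstacle here, since the bulk of the work has been done in the theorem from which the corollary is deduced; the only point requiring mild care is the existence of a Chebyshev center in $\ds$, which is handled by the compactness argument above.
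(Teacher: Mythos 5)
Your proposal is correct and follows essentially the same route as the paper: specialize Theorem \ref{prop - centre J general} to the constraint set $\mathcal{R}=\ds$, secure existence of a Chebyshev center via compactness of $\ds$ (the paper cites Proposition \ref{prop - exist} rather than redoing the continuity argument), and use $J_nAJ_n=J_n$ for $A\in\ds$ to conclude. Your detour through the infimum formula with $\alpha=1$ is a harmless variant of the paper's direct use of $R_{\|\cdot\|}(\ds)=\|J_nAJ_n-I_n\|$.
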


The final main theorem regroup many results from this paper and establishes some geometric properties of the Birkhoff polytope $\ds$ within the ambiant space $M_n(\mathbb{R})$ equipped with the metric induced by the matrix norms $\|\cdot\|_{\lp}$ for $1 \leq p \leq \infty$.

\begin{theorem}
	Given $D\in\ds$ and $1 \leq p \leq \infty$, the following hold true: 
	\begin{enumerate}[label=(\roman*)]
		\item $\|D\|_{\lp}=1$;
		\item The special doubly stochastic matrix $J_n$ is the unique Chebyshev center of $\ds$ relative to the metric space $(\ds,\|\cdot\|_{\lp})$.
		\item The Chebyshev radius of $\ds$ relative to the metric space $(\ds,\|\cdot\|_{\lp})$ is given by $\|J_n-I_n\|_{\lp}$, where $I_n$ is the identity matrix. In particular, it is equal to $1$ for $p = 2$ and to $2(1-\tfrac{1}{n})$ for $p=1, \infty$.
	\end{enumerate}
\end{theorem}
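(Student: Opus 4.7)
My plan is to dispatch (i) and the existence halves of (ii) and (iii) via prior results, then compute the explicit values in (iii), and finally tackle uniqueness in (ii), which I expect to be the main obstacle.

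For (i), I will combine two observations: every permutation matrix is an $\ell^p_n$-isometry, so $\|P\|_{\lp}=1$ for all $P \in \Pn$; and $A \mapsto \|A\|_{\lp}$ is convex on $M_n(\mathbb{R})$. Property (ii) of Section~\ref{sec - prop} then yields $\|D\|_{\lp}\leq \max_{P\in\Pn} \|P\|_{\lp} = 1$, while $De=e$ supplies the reverse bound. The general formula $R_{\|\cdot\|_{\lp}}(\ds) = \|J_n - I_n\|_{\lp}$ in (iii) and the existence of $J_n$ as a Chebyshev center in (ii) then follow from the Corollary in Section~\ref{Sec: Remarks}, which applies because $\|\cdot\|_{\lp}$ is permutation-invariant. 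The explicit values in (iii) are a direct computation: for $p=1,\infty$, the maximum column (resp.\ row) sum of $|J_n - I_n|$ equals $(n-1)/n + (n-1)(1/n) = 2(1 - 1/n)$; for $p=2$, since $J_n - I_n$ is symmetric, annihilates $e$, and coincides with $-I$ on $e^\perp$, its singular values are $0$ and $1$ (with multiplicity $n-1$).

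The substantive part is uniqueness. For $p = \infty$, the row-sum identity
\[
\|A - P_\sigma\|_{\lp[\infty]} = 2\max_i (1 - a_{i,\sigma(i)}),
\]
combined with the Chebyshev bound $\leq 2(1 - 1/n)$ for every permutation $\sigma$, forces $a_{ij}\geq 1/n$ at every entry (since every pair $(i,j)$ is of the form $(i,\sigma(i))$ for some $\sigma$), and the row-sum constraint then pins every entry to exactly $1/n$; the case $p = 1$ is symmetric. For $p = 2$, I set $B := A - J_n$ (so $Be = e^{T} B = 0$ and consequently $BJ_n = J_n B = 0$) and expand
\[
(A - P)(A - P)^{T} = I - J_n - PB^{T} - BP^{T} + BB^{T}.
\]
The Chebyshev hypothesis $(A-P)(A-P)^{T} \leq I$ rearranges to $BB^{T} \leq J_n + PB^{T} + BP^{T}$; averaging over $P \in \Pn$, using Lemma~\ref{lem - bonus3} (so that $\tfrac{1}{n!}\sum_P P = J_n$) and $J_n B^{T} = BJ_n = 0$, yields $BB^{T} \leq J_n$. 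Since $BB^{T}$ is positive semidefinite, annihilates $e$ (as $B^{T} e = 0$), and is dominated by the rank-one matrix $J_n$ which vanishes on $e^\perp$, it follows that $BB^{T} = 0$, hence $B = 0$.

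The main obstacle I foresee is uniqueness for the intermediate range $1 < p < 2$; the dual range $2 < p < \infty$ then follows from the identity $\|M\|_{\lp} = \|M^{*}\|_{\lp[q]}$ recalled in Section~\ref{subsec - def}. Here the column-test lower bound $\|A - P\|_{\lp} \geq \|(A-P)e_j\|_p$ is tight against the Chebyshev radius only at $p = 1$, and the positive-semidefinite averaging trick is intrinsically Hilbertian, so a new ingredient is needed. A natural route is to leverage the strict convexity of $\ell^p_n$ for $1 < p < \infty$ in conjunction with the Birkhoff decomposition $A = \sum_i \alpha_i Q_i$: if $A \neq J_n$, one seeks a specific permutation $P$ and a norming pair $(x, y) \in \ell^p_n \times \ell^q_n$ realizing $|y^{T}(A - P)x| > R_p\|x\|_p\|y\|_q$, contradicting the Chebyshev bound and thereby forcing $A = J_n$.
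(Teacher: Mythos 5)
Most of what you do is correct, and in places genuinely different from the paper. Part (i) via convexity of the norm and property (ii) of Section~\ref{sec - prop} is fine (the paper instead sums a Birkhoff decomposition, and gets the lower bound from $DJ_n=J_n$ rather than $De=e$; both work). The existence of $J_n$ as center and the radius formula via Corollary~\ref{cor - centre J}, and the explicit values $2(1-\tfrac1n)$ and $1$, match the paper. Your uniqueness argument for $p=\infty$ (row sums force $a_{ij}\geq \tfrac1n$ for every entry, then the row-sum constraint pins $A=J_n$) is essentially the computation behind Theorem~\ref{thm - Psi_1-Psi_infty}, which the paper uses for $p=1$. Your $p=2$ argument is correct and is a different route: you translate $\|A-P\|_{\lp[2]}\leq 1$ into $(A-P)(A-P)^{T}\preceq I$, average over $\Pn$ using Lemma~\ref{lem - bonus3} to get $BB^{T}\preceq J_n$ with $B=A-J_n$, and conclude $B=0$; the paper instead compares with the Frobenius norm, using that $0$ is always a singular value of $D_0-P$ and invoking strict convexity of $\|\cdot\|_\F$ together with Proposition~\ref{prop - uniqueness}. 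Your version is more self-contained.

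The genuine gap is uniqueness for $1<p<2$ (and, by duality, $2<p<\infty$), which you yourself flag as the main obstacle but do not close. Your proposed route --- find a permutation $P$ and a norming pair $(x,y)$ with $|y^{T}(A-P)x|>R_p\|x\|_p\|y\|_q$ whenever $A\neq J_n$ --- is just a restatement of the inequality $\|A-P\|_{\lp}>R_p(\ds)$ that has to be proven, so as written it is not an argument. The paper's mechanism is different and relies on two ingredients your sketch lacks: first, if $D$ is a center then so is $\lambda D+(1-\lambda)J_n$ (Lemma~\ref{lem - bonus1}(ii)), and strict convexity of the \emph{vector} $\ell^p_n$-norm applied to a norming vector $x_0$ for $P=I_n$ forces $(D-J_n)x_0=0$; second, Lemma~\ref{lem - A-J_v.p.} confines $x_0$ to being a multiple of $e$ (impossible, since it would give radius $0$) or to satisfying $Dx_0=J_nx_0=0$, which yields $R_p(\ds)=1$ --- and this contradicts the \emph{strict} inequality $\|J_n-I_n\|_{\lp}>1$ for $p\neq 2$, $n\neq 2$ recorded in \eqref{prop - RIMS} (an external input from \cite{bouthat2}), with $n=2$ handled separately by \eqref{n=2}. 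Without some quantitative input of this kind (a strict lower bound on $\|J_n-I_n\|_{\lp}$, or an explicit witness), strict convexity of $\ell^p_n$ alone does not obviously produce the contradiction, so this case remains unproven in your proposal.
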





\section{The minimum and maximum distance of an element of the Birkhoff polytope from the origin}
\label{sec - size}

We begin our discussion by presenting a useful result on the range of the operator norms from $\ell^p_n$ to $\ell^p_n$ $(1 \leq p \leq \infty)$ when their operand runs through the Birkhoff polytope $\ds$.  Finding lower and upper bounds for this norms (as well as identifying the conditions under which these are optimal) will prove valuable later on. 
%
%
%
%
Since $DJ_n=J_nD=J_n$ for any doubly stochastic matrix $D\in\ds$, then, regardless of our choice of matrix norm,
\begin{equation}\label{eq - lower}
	\|J_n\| \,=\, \|DJ_n\| \,\leq\, \|D\|\|J_n\|
\end{equation}
and thus $\|D\|\geq 1$. We will now show that if $\|\cdot\|$ is the operator norm from $\ell^p_n$ to $\ell^p_n$, then this trivial lower bound is in fact optimal.

\begin{proposition}\label{thm - norm_lp}
    Given $D\in\ds$ and $1 \leq p \leq \infty$, then $\|D\|_{\lp}=1$.
\end{proposition}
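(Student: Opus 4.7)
The lower bound $\|D\|_{\lp} \geq 1$ is already in hand from \eqref{eq - lower}, so the entire task is to establish the reverse inequality $\|D\|_{\lp} \leq 1$. The most economical route, given the structural tools already introduced in the paper, is to exploit the Birkhoff decomposition.

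The plan is as follows. First, observe that every permutation matrix $P \in \Pn$ acts on $\mathbb{C}^n$ simply by permuting the coordinates of a vector. Since the vector $p$-norm is invariant under coordinate permutations, any $P \in \Pn$ is an isometry of $\ell^p_n$, so $\|P\|_{\lp} = 1$ for all $P \in \Pn$ and every $1 \leq p \leq \infty$. Next, invoke Birkhoff's theorem (recalled in the introduction) to write $D = \sum_{i=1}^r \alpha_i P_i$ with $P_i \in \Pn$, $\alpha_i \geq 0$ and $\sum_{i=1}^r \alpha_i = 1$. The triangle inequality together with absolute homogeneity then yields
\[
\|D\|_{\lp} \,\leq\, \sum_{i=1}^r \alpha_i \|P_i\|_{\lp} \,=\, \sum_{i=1}^r \alpha_i \,=\, 1,
\]
which combined with $\|D\|_{\lp} \geq 1$ gives the equality.

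There is no real obstacle: the crux of the argument is the harmless observation that permutation matrices are isometries of $\ell^p_n$, and from there Birkhoff's convex-hull representation does all of the work. If one preferred not to appeal to Birkhoff's theorem, an equally short alternative would be to handle $p=1$ and $p=\infty$ directly (using the column-sum and row-sum conditions, respectively) and then apply the Riesz--Thorin interpolation theorem to bridge the intermediate values of $p$; but the Birkhoff route is both self-contained and perfectly aligned with the convex-geometric viewpoint of the paper.
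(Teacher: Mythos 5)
Your argument is correct and follows essentially the same route as the paper: reduce to the upper bound via \eqref{eq - lower}, take a Birkhoff decomposition $D=\sum_i \alpha_i P_i$, and use the triangle inequality together with $\|P_i\|_{\lp}=1$ (which the paper gets from permutation-invariance of the operator norm, and you get equivalently from permutation matrices being $\ell^p_n$-isometries). No gaps.
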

\begin{proof}
It suffices to show that $\|D\|_{\lp} \leq 1$. Let $D=\sum_{i=1}^r \alpha_i P_i$ be a Birkhoff decomposition of $D$. 
Since the operator norm from $\ell^p_n$ to $\ell^p_n$ is permutation-invariant, we have
\begin{align*}
    \|D\|_{\lp} \,&=\, \bigg\| \sum_{i=1}^r \alpha_i P_i \bigg\|_{\lp} \leq\, \sum_{i=1}^r \alpha_i \|P_i\|_{\lp} \\
    &=\, \sum_{i=1}^r \alpha_i \|I_n\|_{\lp} \,=\,  \|I_n\|_{\lp}  \,=\, 1. \qedhere
\end{align*}
\end{proof}

\begin{remark}
The same line of ideas can be applied to any permutation-invariant norm $\|\cdot\|$. Thus, we obtain that $\max_{D\in\ds} \|D\| = \|I_n\|= \|P\|$, where $P$ is any permutation matrix $P\in\Pn$.
\end{remark}

\section{The Minimal Bounding Ball of the Birkhoff polytope}
\label{sec - ball}

Given a metric space $(\mathcal{U},d)$, let $\mathcal{B}$ be a nonempty closed, bounded subset of $\mathcal{U}$ and let $B(x,r):= \{ y \in \mathcal{U} : d(x,y) \leq r \}$ denote the closed ball of radius $r>0$ centered at $x\in \mathcal{U}$. We say that $B(x,r)$ is a \emph{bounding ball} of $\mathcal{B}$ centered at $x$ if $\mathcal{B} \subseteq B(x,r)$. The smallest radius $r$ such that $B(x,r)$ is a bounding ball of $\mathcal{B}$ is referred to as the \emph{minimal radius of a bounding ball of $\mathcal{B}$ centered at $x$}. Depending on which information is granted in the context and which is not, it can be either denoted by $r_x(\mathcal{B})$ (for instance see \cite{Goebel1990}) or by $r_d(x)$.

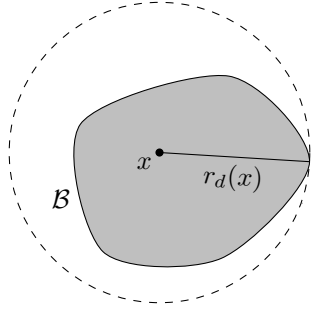
\begin{figure}[h]
	\pgfmathsetseed{15946}
	\centering
	\begin{tikzpicture}[scale=1.35]
		\filldraw[fill=lightgray] plot [smooth cycle, samples=5,domain={1:5}] (\x*360/5+5*rnd:0.5cm+1cm*rnd) node at (0,0) {};
		\fill(1,-0.4)node[below,xshift=-3.2cm,yshift=0.3cm]{$\mathcal{B}$} circle (0cm);
		\draw[dashed](-0.4,0.1) circle (1.48);
		\fill(-0.4,0.1)node[below,xshift=-.2cm,yshift=.05cm]{$x$} circle (.04cm);

		\draw(-0.4,0.1)--(1.075,0.01) node[midway,sloped,yshift=-0.28cm]{$r_d(x)$};
	\end{tikzpicture}
	\caption{The smallest enclosing ball of the nonempty closed bounded set $\mathcal{B}$ centered at $x$ with respect to the metric $d$.}
	\label{fig - Psi}
\end{figure}  

In this series of paper, the ambiant space shall always be $M_n(\mathbb{R})$ while the nonempty closed, bounded set $\mathcal{B}$ of interest shall always be the Birkhoff polytope $\ds$. But the metric (induced by a norm) with which the ambiant space is endowed shall not always be the same. We will thus adopt the notation $r_{\|\cdot\|}(x)$, which places greater emphasis on the metric chosen. Using this notation, we have
\begin{align}\label{def: r(x)}
    r_{\|\cdot\|}(A) \,=\, \sup_{B\in \ds} \!\|A-B\|.
\end{align}
As a matter of fact, the set $\ds$ being compact, the supremum in \eqref{def: r(x)} can be replaced by a maximum, and it follows that the minimal radius of a bounding ball of $\ds$ centered at $A$ exists and is attained for every $A \in M_n(\mathbb{R})$.

We now turn our attention to the problem of characterizing the minimal radius of an enclosing ball of $\ds$ centered at $A\in M_n(\mathbb{R})$ when the ambiant space is equipped with the operator norms from $\ell^p_n$ to $\ell^p_n$. 
%
%
%
%
This problem, for a generic $1 \leq p \leq \infty$, has proven to be quite difficult. Unsurprisingly, the particular case where $p =1$ (and by the same token the case $p= \infty$ since $\|A\|_{\lp[1]} = \|A^*\|_{\lp[\infty]}$) turns out to be more tractable. Recall that in this special case, we have the convenient formula $\|A\|_{\lp[1]} = \max _{1\leq j\leq n}\sum _{i=1}^n|a_{ij}|$, which is the maximum absolute column sum of the matrix $A$. Using this standard characterization, we obtain the following result.

\begin{theorem}\label{thm - Psi_1-Psi_infty}
The minimal radius of a bounding ball of $\ds$ centered at $D\in\ds$ relative to the operator norm from $\ell^1_n$ to $\ell^1_n$, denoted by $r_1(D)$, is
    $$
    r_1(D) \,=\, 2\Big(1-\min_{i,j} d_{ij}\Big).
    $$
    Moreover, the equality $\textstyle \|D-S\|_{\lp[1]} = 2\left(1-\min_{i,j} d_{ij}\right)$ is realized by $S\in\ds$ if and only if there exist indices $l,k$ such that $s_{lk}=1$ and $\textstyle d_{lk}=\min_{i,j} d_{ij}$.
\end{theorem}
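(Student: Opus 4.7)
The starting point is the standard formula $\|A\|_{\lp[1]} = \max_{1\le j\le n} \sum_{i=1}^n |a_{ij}|$ recalled just before the theorem. Given $D, S \in \Omega_n$, the $j$-th columns of $D$ and $S$ both sum to $1$, so their difference has zero coordinate sum, and the elementary identity $\|v\|_1 = 2\sum_i v_i^+$ valid for any zero-sum real vector $v$ yields $\sum_i |d_{ij} - s_{ij}| = 2\sum_i (s_{ij} - d_{ij})^+$. This reduces the problem to a column-wise maximization over the standard simplex.

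For the upper bound $r_1(D) \le 2\bigl(1 - \min_{i,j} d_{ij}\bigr)$, I would fix a column $j$, set $I_j = \{i : s_{ij} > d_{ij}\}$, and write
\[
\sum_i (s_{ij} - d_{ij})^+ \,=\, \sum_{i\in I_j} s_{ij} - \sum_{i\in I_j} d_{ij} \,\le\, 1 - \min_i d_{ij},
\]
using $\sum_{i\in I_j} s_{ij} \le \sum_i s_{ij} = 1$ together with $\sum_{i\in I_j} d_{ij} \ge \min_i d_{ij}$ whenever $I_j \ne \emptyset$ (the case $I_j = \emptyset$ being trivial). Taking the maximum over $j$ and relaxing $\min_i d_{ij}$ to $\min_{i,j} d_{ij}$ produces the desired inequality.

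For the achievability, I would select any pair $(l, k)$ with $d_{lk} = \min_{i,j} d_{ij}$ and any permutation matrix $P \in \Pn$ such that $p_{lk} = 1$; the partial assignment $\sigma(k) = l$ plainly extends to a full permutation, so such a $P$ exists. A direct computation of the $k$-th column of $D - P$ gives the value $2(1 - d_{lk})$, which matches the previous upper bound, establishing the identity $r_1(D) = 2(1 - \min_{i,j} d_{ij})$ and the sufficiency direction of the equality characterization.

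The converse of the equality characterization proceeds by tracing back the inequality chain above. If $S \in \Omega_n$ saturates the bound, then some column $k$ must satisfy both $\min_i d_{ik} = \min_{i,j} d_{ij}$ and the twin identifications $\sum_{i\in I_k} s_{ik} = 1$ and $\sum_{i\in I_k} d_{ik} = \min_i d_{ik}$. In the generic setting where this minimum is positive, the second identification forces $I_k = \{l\}$ with $d_{lk}$ minimal, whereupon the first yields $s_{lk} = 1$. I expect the main delicate point of the proof to be exactly this unwinding of the equality conditions — in particular, handling the situation where several entries of $D$ attain the minimum simultaneously, where one must use the full doubly-stochastic structure of $S$ (and not only the fact that its $k$-th column lies in the simplex) to recover the claimed conclusion.
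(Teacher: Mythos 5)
Your upper bound and your attainment construction are correct and essentially the same as the paper's own argument: the paper also proceeds column by column, writes $\sum_i|d_{ij}-s_{ij}|=2\sum_{i\in\mathcal{I}(j)}(d_{ij}-s_{ij})$ for the zero-sum column of $D-S$, bounds this by $2\bigl(1-\min_i d_{ij}\bigr)$ using double stochasticity, and realizes the bound with any $S$ having $s_{lk}=1$ at a minimizing entry $d_{lk}$. Your bookkeeping (bounding $\sum_{I_j}s_{ij}\le 1$ and $\sum_{I_j}d_{ij}\ge\min_i d_{ij}$ on the set where $s_{ij}>d_{ij}$) is a harmless variant of the paper's cardinality argument for $\mathcal{I}(j)$, and your use of a permutation matrix for attainment is fine since only the $k$-th column matters, so the same computation covers every $S$ with $s_{lk}=1$.

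The genuine gap is in the converse, precisely where you flag it, and it is worse than a ``delicate point to be handled'': when $\min_{i,j}d_{ij}=0$, saturation of your second estimate only forces $d_{ik}=0$ for every $i\in I_k$, not that $I_k$ is a singleton, and the completion you anticipate (invoking the full doubly stochastic structure of $S$) cannot exist, because the ``only if'' assertion is actually false in that case. Take
\[
D=\begin{pmatrix}0&\tfrac12&\tfrac12\\ 0&\tfrac12&\tfrac12\\ 1&0&0\end{pmatrix},
\qquad
S=\begin{pmatrix}\tfrac12&\tfrac12&0\\ \tfrac12&0&\tfrac12\\ 0&\tfrac12&\tfrac12\end{pmatrix}.
\]
Both matrices are doubly stochastic, $\min_{i,j}d_{ij}=0$, and the first column of $D-S$ has absolute sum $2$, so $\|D-S\|_{\lp[1]}=2=2\bigl(1-\min_{i,j}d_{ij}\bigr)$, yet no entry of $S$ equals $1$. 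Thus your argument (and the characterization itself) is only valid under the implicit assumption $\min_{i,j}d_{ij}>0$, i.e.\ for $D$ with strictly positive entries, where your singleton argument closes the proof. You should know that the paper's own proof makes exactly the same silent jump --- it asserts that saturation of the second inequality forces $\mathcal{I}(k)^c=\{l\}$, which likewise requires $\min_i d_{ik}>0$ --- so your proposal is no less complete than the published argument; but as written it leaves the boundary case of the ``only if'' direction unproved, and in fact unprovable as stated. (The first assertion of the theorem, $r_1(D)=2\bigl(1-\min_{i,j}d_{ij}\bigr)$, and the ``if'' direction are unaffected and fully established by your argument.)
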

\begin{proof}
    Given $D,S\in\ds$, let $\mathcal{I}(j)$ be the set of indices $i$ such that $d_{ij}-s_{ij}>0$ and $\mathcal{I}(j)^c$ be the set of indices $i$ such that $d_{ij}-s_{ij}\leq0$. Then
    \begin{align*}
        \sum_{i=1}^n|d_{ij} -s_{ij}| 
        \,&=\, \sum_{\mathcal{I}(j)} (d_{ij}-s_{ij}) -\sum_{\mathcal{I}(j)^c} (d_{ij}-s_{ij})\\
        &=\, 2\sum_{\mathcal{I}(j)} (d_{ij}-s_{ij}) - \bigg( \sum_{\mathcal{I}(j)} (d_{ij}-s_{ij}) + \sum_{\mathcal{I}(j)^c} (d_{ij}-s_{ij}) \bigg)\\[-3pt]
        &=\, 2\sum_{\mathcal{I}(j)} (d_{ij}-s_{ij}) -\sum_{i=1}^n (d_{ij}-s_{ij}) \,=\, 2\sum_{\mathcal{I}(j)} (d_{ij}-s_{ij})\\
        &\leq\, 2\sum_{\mathcal{I}(j)} d_{ij},
    \end{align*}
where the last equality is a consequence of the fact that both $D$ and $S$ are doubly stochastic.

Suppose that $\mathcal{I}(j) = \{1,2,\dots,n\}$. On the one hand,  $d_{ij}-s_{ij}>0$ for every $i$. But on the other hand $\sum_{i=1}^n(d_{ij}-s_{ij}) = \sum_{i=1}^n d_{ij} - \sum_{i=1}^n s_{ij} = 0$, a contradiction. Therefore, the cardinality of $\mathcal{I}(j)$ is bounded above by $n-1$. Since $d_{ij}\geq0$ for $1\leq i,j\leq n$, it follows that $\sum_{\mathcal{I}(j)} d_{ij}$ is bounded above by the sum of the $n-1$ largest coefficients $d_{ij}$ (for a fixed $j$). Hence,
\begin{align}\label{eq - eg_psi_1}
    \sum_{i=1}^n|d_{ij} -s_{ij}| \leq 2\sum_{\mathcal{I}(j)} d_{ij} \leq 2\bigg( \sum_{i=1}^n d_{ij} - \min_{i} d_{ij} \bigg) \!= 2\big( 1 - \min_{i} d_{ij} \big).
\end{align}
Taking the maximum over $j$ for $1\leq j\leq n$, we get
\begin{align}
    \|D-S\|_{\lp[1]} &= \max_{j} \sum_{i=1}^n|d_{ij} -s_{ij}|
    \leq \max_{j}\, 2\big(1-\min_{i} d_{ij}\big)\label{eq - D-S}
    = 2\big(1-\min_{i,j} d_{ij}\big).
\end{align}
Taking the maximum on the $S\in\ds$ then yields
\begin{equation}\label{eq - *}
    r_1(D) = \max_{S\in\ds} \|D-S\|_{\lp[1]}
\leq \max_{S\in\ds} 2\big(1-\min_{i,j} d_{ij}\big)
= 2\big(1-\min_{i,j} d_{ij}\big).
\end{equation}
For any matrix $S\in\ds$ with $s_{lk}=1$, where $l$ and $k$ are such that $d_{lk}=\min_{i,j} d_{ij}$, we have $\sum_{i=1}^n |d_{ik}-s_{ik}|=\sum_{i=1,i\neq l}^n d_{ik} + |d_{lk}-1|=2(1-d_{lk})=2\big(1-\min_{i,j} d_{ij}\big)$ and thus by \eqref{eq - *}, $r_1(D) = 2\big(1-\min_{i,j} d_{ij}\big)$. 

 
Now, suppose that $\|D-S\|_{\lp[1]} = r_1(D)$. Then \eqref{eq - D-S} ensures that there exists an index $k$ such that $\sum_{i=1}^n|d_{ik} -s_{ik}| = 2\big(1-\min_{i} d_{ik}\big)$. This identity is satisfied if and only if both inequalities of \eqref{eq - eg_psi_1} are saturated, which is the case if and only if $s_{ik}=0$ for every $i\in \mathcal{I}(k)$ and $\mathcal{I}(k)^c=\{l\}$, where $l$ satisfy $d_{lk}=\min_i d_{ik}$. Hence, we have equality if and only if there exists indices $l,k$ such that $s_{lk}=1$ and $d_{lk}=\min_{i,j} d_{ij}$. 
\end{proof}

\begin{remark}
    Since $\|D-S\|_{\lp[1]} = \|D^* -S^*\|_{\lp[\infty]}$, it directly follows from Theorem \ref{thm - Psi_1-Psi_infty} that $r_\infty(D)$, the minimal radius of a bounding ball of $\ds$ centered at $D\in\ds$ relative to the operator norm from $\ell^\infty_n$ to $\ell^\infty_n$, is equal to
    $
    r_\infty(D) = 2\big(1-\min_{i,j} d_{ij}\big).
    $
    The equality $\textstyle \|D-S\|_{\lp[\infty]} = 2\left(1-\min_{i,j} d_{ij}\right)$ is also realized by $S\in\ds$ if and only if there exist indices $l,k$ such that $s_{lk}=1$ and $\textstyle d_{lk}=\min_{i,j} d_{ij}$.
\end{remark}

\section{The Chebyshev radius and center of the Birkhoff polytope}
\label{sec - Chebyshev}

The \emph{Smallest Enclosing Ball Problem} is a classical question in geometry that generalizes the \emph{Smallest Enclosing Circle Problem} introduced by the 19th century English mathematician mathematician James Joseph Sylvester (who, among many other things, coined the word ``matrix") \cite{sylvester1857question}. Given a metric space $(\mathcal{U},d)$, a nonempty closed constraint set $\mathcal{R}\subseteq \mathcal{U}$ and a nonempty closed, bounded set $\mathcal{B} \subseteq \mathcal{U}$, the Smallest Enclosing Ball Problem lies in finding a point $x \in \mathcal{R}$ and the  smallest radius $r \geq 0$ such that the ball $B(x,r)$ contains the set $\mathcal{B}$, i.e., $\mathcal{B} \subseteq B(x,r)$ and $r$ is optimal. This can be intuitively stated as finding the minimal bounding ball of $\mathcal{B}$ centered at some point in the constraint set $\mathcal{R}$ and relative to the metric space $(\mathcal{U},d)$.

The minimal bounding ball of $\mathcal{B}$ relative to the metric space $(\mathcal{U},d)$ and the constraint $\mathcal{R}$ does not always exist and, if it does, it need not be unique. If it does exist, then the radius $r$ is called the \emph{Chebyshev radius} of $\mathcal{B}$ relative to the metric space $(\mathcal{U},d)$ and the constraint $\mathcal{R}$, denoted $R_d(\mathcal{B})$. As for the point $x \in \mathcal{R}$, it is a \emph{Chebyshev center} of $\mathcal{B}$ relative to the metric space $(\mathcal{U},d)$ and the constraint $\mathcal{R}$. Using this notation, observe that we have 
$$
R_d(\mathcal{B}) \,=\, \infp_{x\in \mathcal{R}} \sup_{y\in \mathcal{B}} d(x,y) \,=\, \inf_{x\in \mathcal{R}} r_d(x).
$$

If $\mathcal{R}=\mathcal{U}$, then the smallest enclosing ball problem is said to be \emph{unconstrained}. In this case, we instead consider the metric space $(\mathcal{R},d)$ and we say that $B(x,r)$ is a minimal bounding ball of $\mathcal{B}$ relative to the metric space $(\mathcal{R},d)$ (likewise for the Chebyshev radius and centers). Moreover, note that in these cases, we need to have $\mathcal{B} \subseteq \mathcal{R}$, as opposed to the general case where the constraint set $\mathcal{R}$ can be entirely unrelated to $\mathcal{B}$.

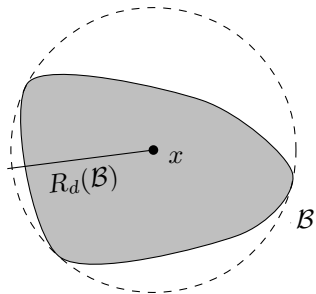
\begin{figure}[ht]
	\pgfmathsetseed{1594627271}
	\centering
	\begin{tikzpicture}[scale=1.6]
		\filldraw[fill=lightgray] plot [smooth cycle, samples=5,domain={1:5}] (\x*360/5+5*rnd:0.5cm+1cm*rnd) node at (0,0) {};
		\fill(1,-0.4)node[below,xshift=.2cm,yshift=.3cm]{$\mathcal{B}$} circle (0cm);
		\draw[dashed](-0.1359,0.205)node[below,xshift=.2cm]{} circle (1.1844);
		\fill(-0.1359,0.205)node[below,xshift=.3cm,yshift=.1cm]{$x$} circle (.04cm);¸
		
		\draw(-0.1359,0.205)--(-1.35,0.05) node[midway,sloped,yshift=-0.3cm]{$R_d(\mathcal{B})$};
		
	\end{tikzpicture}
	\caption{The minimal bounding ball of the nonempty closed, bounded set $\mathcal{B}$ with respect to the metric $d$.}
	\label{fig:centre+rayon}
\end{figure}

More than a century after it has been posed (albeit in a more circumscribed form), the Smallest Enclosing Ball Problem remains an active area of research (see \cite{MR2343160,Mordukhovich2013} and the references therein).

 Before beginning our study of the Chebyshev centers and the Chebyshev radius of the Birkhoff polytope relative to the operator norms from $\ell^p$ to $\ell^p$, it is worthwhile to present some well-known sufficient conditions under which the existence and unicity of the minimal bounding ball of $\mathcal{B}$ is ensured. These will prove useful in the course of our presentation, and in the second part of this series of paper.


\subsection{Results in a general setting}
\label{subsec - gen}

The minimal bounding ball of a set $\mathcal{B}$ does not always exist. For instance, in the 2-dimensional Euclidean space with the constraint set $\mathcal{R}=B(0,1)$, where $B(0,1)$ denote the open unit ball, it is clear that the minimal bounding ball of $\mathcal{B}=\{(2,0)\}$ does not exist. Hence, 
we begin by stating some sufficient conditions, due to Mordukhovich, Nguyen Mau and Villalobos \cite{Mordukhovich2013},  guaranteeing the \textit{existence} of a minimal bounding ball of $\mathcal{B}$.

\begin{proposition}\cite[Theorem 1]{Mordukhovich2013}\label{prop - exist}
    Let $\mathcal{B}$ be a nonempty closed bounded subset in the normed vector space $(V,\|\cdot\|)$. Suppose one of the following holds:
    \begin{enumerate}[label=(\roman*)]
        \item The constraint set $\mathcal{R}\subseteq V$ is nonempty and compact.
        \item The normed vector space $(V,\|\cdot\|)$ is a reflexive Banach space and the constraint set $\mathcal{R}\subseteq V$ is weakly closed.
    \end{enumerate}
    Then there exists a minimal bounding ball of $\mathcal{B}$ relative to $(V,\|\cdot\|)$ and the constraint set $\mathcal{R}$.
\end{proposition}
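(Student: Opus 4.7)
The plan is to reformulate the problem as minimizing the scalar function $f\colon V \to [0,\infty)$ defined by $f(x) := \sup_{y\in \mathcal{B}} \|x-y\|$, which is well defined and finite since $\mathcal{B}$ is bounded. A minimizer of $f$ on $\mathcal{R}$ is precisely a Chebyshev center of $\mathcal{B}$, and the corresponding minimum value is the Chebyshev radius, so it suffices to establish the existence of a minimizer under each of the two hypotheses. Before splitting into cases, I would record two regularity properties of $f$. First, each map $x \mapsto \|x-y\|$ is convex and $1$-Lipschitz, so, as a pointwise supremum of such maps, $f$ itself is convex and $1$-Lipschitz, hence in particular continuous. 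Second, $f$ is coercive: fixing any $y_0 \in \mathcal{B}$, the reverse triangle inequality gives $f(x) \geq \|x - y_0\| \geq \|x\| - \|y_0\|$, so $f(x) \to \infty$ as $\|x\| \to \infty$.

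In case (i), the conclusion follows immediately from the classical Weierstrass extreme value theorem: $f$ is continuous and $\mathcal{R}$ is nonempty and compact, so $f$ attains its infimum on $\mathcal{R}$. In case (ii), I would invoke the direct method of the calculus of variations. Pick a minimizing sequence $(x_n) \subseteq \mathcal{R}$ with $f(x_n) \to \inf_{\mathcal{R}} f$. By coercivity, $(x_n)$ must be bounded, and reflexivity of $V$ then guarantees that some subsequence converges weakly to a point $x^* \in V$. Since $\mathcal{R}$ is weakly closed, $x^* \in \mathcal{R}$. Invoking the fact that every convex and strongly lower semicontinuous function on a Banach space is also weakly lower semicontinuous, one obtains $f(x^*) \leq \liminf_k f(x_{n_k}) = \inf_{\mathcal{R}} f$, whence $x^*$ is the desired Chebyshev center.

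The main subtlety lies in this last step of case (ii), namely the passage from strong to weak lower semicontinuity, which ultimately rests on the Hahn--Banach separation theorem (or, equivalently, on Mazur's lemma to the effect that the weak and norm closures of a convex set coincide). Once this standard functional-analytic fact is granted, the remainder of the argument is a routine application of boundedness, reflexivity, and weak closedness; case (i) requires no such subtlety, as sheer compactness does all the work.
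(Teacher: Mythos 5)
Your argument is correct: the reformulation as minimizing the convex, $1$-Lipschitz, coercive function $f(x)=\sup_{y\in\mathcal{B}}\|x-y\|$ over $\mathcal{R}$, with Weierstrass in case (i) and the direct method (bounded minimizing sequence, weak sequential compactness from reflexivity, weak closedness of $\mathcal{R}$, and weak lower semicontinuity of the convex continuous $f$ via Mazur/Hahn--Banach) in case (ii), is exactly the standard proof of this result. The paper itself offers no proof, citing Mordukhovich, Nam and Villalobos instead, and your argument matches the approach of that cited source; the only implicit point worth noting is that case (ii) also needs $\mathcal{R}\neq\emptyset$ to extract a minimizing sequence, an assumption the paper's statement leaves tacit.
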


If a minimal bounding ball of $\mathcal{B}$ exists, its uniqueness is not guaranteed. For example, consider the $\mathbb{R}^2$ with the Euclidean metric. If $\mathcal{R}=\{(x,y): x^2+y^2=1\}$ and $\mathcal{B}=\{(0,0)\}$, then it is clear that a closed unit ball centered at any point of $\mathcal{R}$ is a minimal bounding ball of $\mathcal{B}$. Hence,
we also present a sufficient condition to guarantee the \textit{uniqueness} of the minimal bounding ball of $\mathcal{B}$ in the context of \textit{strictly convex normed vector spaces}, i.e., a normed vector space where $x \neq y$ imply that
\[
\|\lambda x+(1-\lambda)y\| \,<\, \lambda \|x\| + (1-\lambda)\|y\|
\]
for all $0 < \lambda < 1$.


\begin{proposition}\label{prop - uniqueness}
Let $\mathcal{B}$ be a nonempty compact subset in the strictly convex normed vector space $(V,\|\cdot\|)$ and let $\mathcal{R}\subseteq V$ be a nonempty closed, convex constraint set. If there exists a Chebyshev center of $\mathcal{B}$ relative to $(V,\|\cdot\|)$ and the constraint set $\mathcal{R}$, then it is unique.
\end{proposition}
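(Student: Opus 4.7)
The plan is to argue by contradiction, exploiting the convexity of $\mathcal{R}$ to construct a candidate better than any supposed non-unique Chebyshev centers. Suppose $x_1, x_2 \in \mathcal{R}$ are two distinct Chebyshev centers of $\mathcal{B}$, so that $\sup_{y \in \mathcal{B}} \|x_1 - y\| = \sup_{y \in \mathcal{B}} \|x_2 - y\| = R_{\|\cdot\|}(\mathcal{B}) =: R$. Set $x_0 = \tfrac{1}{2}(x_1 + x_2)$, which lies in $\mathcal{R}$ by convexity. I will show that $r_{\|\cdot\|}(x_0) < R$, contradicting the definition of $R$ as the infimum of $r_{\|\cdot\|}$ over $\mathcal{R}$.

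Because $\mathcal{B}$ is compact and the map $y \mapsto \|x_0 - y\|$ is continuous, the supremum $r_{\|\cdot\|}(x_0)$ is attained at some $y_0 \in \mathcal{B}$. The triangle inequality then gives
\[
\|x_0 - y_0\| \,=\, \bigl\|\tfrac{1}{2}(x_1 - y_0) + \tfrac{1}{2}(x_2 - y_0)\bigr\| \,\leq\, \tfrac{1}{2}\|x_1 - y_0\| + \tfrac{1}{2}\|x_2 - y_0\| \,\leq\, R.
\]
If strict inequality held, we would be done. Otherwise equality propagates through: $\|x_1 - y_0\| = \|x_2 - y_0\| = R$ and the triangle inequality is saturated.

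The final step is to derive a contradiction from the saturated triangle inequality using strict convexity of the norm. If $R = 0$, then $\mathcal{B} = \{x_1\} = \{x_2\}$, directly contradicting $x_1 \neq x_2$. Otherwise both summands $x_1 - y_0$ and $x_2 - y_0$ are nonzero, and equality in the triangle inequality combined with strict convexity forces them to be positive scalar multiples of one another; since they have equal norms, they must be equal, whence $x_1 = x_2$, the sought contradiction.

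The main obstacle is the technical bridge in the last step: extracting from the strict convexity hypothesis the classical fact that $\|a + b\| = \|a\| + \|b\|$ with $a,b \neq 0$ implies $b = \lambda a$ for some $\lambda > 0$. This is a standard consequence of the unit sphere containing no nontrivial line segments and can be invoked as such, but it is where all the geometric content of the proof is concentrated.
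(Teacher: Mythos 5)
Your proof is correct and follows essentially the same route as the paper: assume two distinct centers, pass to a convex combination (which lies in $\mathcal{R}$ by convexity), use compactness of $\mathcal{B}$ to pick a farthest point, and let strict convexity produce the contradiction. The only cosmetic difference is that the paper's stated (stronger) definition of strict convexity yields the strict inequality in one stroke, whereas you invoke the standard characterization via equality in the triangle inequality and therefore handle the degenerate cases ($R=0$, collinear differences) explicitly — a harmless, if anything more careful, variant.
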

\begin{proof}

Suppose that $x, y \in \mathcal{R}$ are two distinct Chebyshev centers. Then, for every $0 < \lambda < 1$,
    \begin{align*}
        R_{\|\cdot\|}(\mathcal{B}) \,&=\, \infp_{w\in \mathcal{R}} \sup_{z\in \mathcal{B}}\|w-z\| \,\leq\, \sup_{z\in \mathcal{B}}\|\lambda x + (1-\lambda)y-z\| \,=\, \|\lambda x + (1-\lambda)y-z_0\| \\
        &<\, \lambda  \| x-z_0\| + (1-\lambda)  \|y-z_0\| \,\leq\,  \lambda \sup_{z\in \mathcal{B}} \| x-z\| + (1-\lambda) \sup_{z\in \mathcal{B}} \|y-z\| \\[-3pt]
        &=\, R_{\|\cdot\|}(\mathcal{B}),
    \end{align*}
    where $z_0 \in \mathcal{B}$ exists by compactness. This is a contradiction and thus, the Chebyshev center is unique.
\end{proof}

Before moving on to the next result, it is worth recalling what a permutation-invariant subset of $M_n$ is. The definition is reminiscent of that of a permutation-invariant norm presented in Section \ref{PINdef}. Indeed, a set $\mathcal{K} \subset M_n$ is said to be \textit{permutation-invariant} if $PKQ\in\mathcal{K}$ for any $K\in\mathcal{K}$ and any permutation matrices $P, Q \in \Pn$. Observe that the sets $\ds$ and $M_n(\mathbb{R})$ are both permutation-invariant.

For the remainder of this section, we shall focus on proving results about the Chebyshev centers and radius of $\ds$ in the context where the metric is induced by a permutation-invariant norm and where the constraint set is also permutation-invariant. In particular, we shall see that under mild conditions, the special matrix $\alpha J_n$ is a Chebyshev center of $\ds$ for some real number $\alpha$ and that the associated Chebyshev radius is given by $\|\alpha J_n-I_n\|$. To establish this, we first need to prove the following lemma, which is interesting in its own right.

\begin{lemma}\label{lem - bonus1}
	Let $\mathcal{R} \subseteq M_n(\mathbb{R})$ be a permutation-invariant constraint set and let $\|\cdot\|$ be a permutation-invariant norm on $M_n(\mathbb{R})$. Let $A_1, A_2\in\mathcal{R}$ be Chebyshev centers of $\ds$ relative to the metric space $(M_n(\mathbb{R}),\|\cdot\|)$ and the constraint set $\mathcal{R}$. Then,
	\begin{enumerate}[label=(\roman*)]
		\item $PA_iQ$ ($i=1, 2$) is a Chebyshev center of $\ds$ for any permutation matrices $P$ and $Q$;
		\item If the constraint set $\mathcal{R}$ is convex, then any convex combination of $A_1$ and $A_2$ is a Chebyshev center of $\ds$.
	\end{enumerate}
\end{lemma}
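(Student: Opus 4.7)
The plan is to verify both parts by combining the defining identity $r_{\|\cdot\|}(A)=\sup_{B\in\ds}\|A-B\|$ with, respectively, the permutation-invariance of the norm and the constraint set, and the triangle inequality. Membership of each candidate in $\mathcal{R}$ is obtained for free: for (i), from the permutation-invariance of $\mathcal{R}$; for (ii), from its convexity. So the real content in each part is showing that the candidate achieves the value $R_{\|\cdot\|}(\ds)$.

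For (i), I would argue as follows. Given permutation matrices $P,Q$, the permutation-invariance of $\|\cdot\|$ lets me move a pair of permutations across the norm:
\[
\|PA_iQ-B\|=\|P^{-1}(PA_iQ-B)Q^{-1}\|=\|A_i-P^{-1}BQ^{-1}\|.
\]
Since $\ds$ is itself permutation-invariant, the map $B\mapsto P^{-1}BQ^{-1}$ is a bijection of $\ds$ onto itself. Taking the supremum over $B\in\ds$ on both sides therefore gives $r_{\|\cdot\|}(PA_iQ)=r_{\|\cdot\|}(A_i)=R_{\|\cdot\|}(\ds)$. Since $PA_iQ\in\mathcal{R}$ by permutation-invariance of $\mathcal{R}$, this identifies $PA_iQ$ as a Chebyshev center.

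For (ii), fix $\lambda\in[0,1]$ and consider $C:=\lambda A_1+(1-\lambda)A_2$, which lies in $\mathcal{R}$ by convexity. For each $B\in\ds$, the triangle inequality gives
\[
\|C-B\|=\|\lambda(A_1-B)+(1-\lambda)(A_2-B)\|\le \lambda\|A_1-B\|+(1-\lambda)\|A_2-B\|.
\]
Taking the supremum over $B\in\ds$ on the right and then on the left (the supremum of a sum is at most the sum of the suprema) yields
\[
r_{\|\cdot\|}(C)\le \lambda r_{\|\cdot\|}(A_1)+(1-\lambda)r_{\|\cdot\|}(A_2)=R_{\|\cdot\|}(\ds).
\]
The reverse inequality is automatic from $C\in\mathcal{R}$ and the definition of $R_{\|\cdot\|}(\ds)$ as an infimum over $\mathcal{R}$, whence equality, and $C$ is a Chebyshev center.

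There is no genuine obstacle in either step; the only thing to be careful about is to exploit permutation-invariance on both sides of the difference (both on the candidate and on the ``sampling'' element $B\in\ds$) in part (i), so that the bijection argument closes correctly, and in part (ii) to phrase the triangle inequality before swapping in the supremum rather than after. Once those two points are handled cleanly, the result follows immediately from the definitions.
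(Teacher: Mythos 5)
Your proof is correct, and both parts rest on the same two mechanisms as the paper's: moving permutations across the norm plus reindexing for (i), and the triangle inequality plus convexity of $\mathcal{R}$ for (ii). The one genuine difference is that you work directly with $r_{\|\cdot\|}(A)=\sup_{B\in\ds}\|A-B\|$, reindexing via the bijection $B\mapsto P^{-1}BQ^{-1}$ of $\ds$ onto itself, whereas the paper first replaces the supremum over $\ds$ by a maximum over the permutation matrices $\Pn$ (invoking the fact that a convex real-valued function on $\ds$ attains its maximum at a permutation matrix) and then uses $Q_1\Pn Q_2=\Pn$ to reindex. Your route is slightly more elementary and more general: it needs only that $\ds$ is a permutation-invariant bounded set, not the extreme-point structure of the Birkhoff polytope, so it would apply verbatim with $\ds$ replaced by any permutation-invariant compact set. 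What the paper's reduction to $\Pn$ buys is a formula, $R_{\|\cdot\|}(\ds)=\inf_{A\in\mathcal{R}}\max_{P\in\Pn}\|A-P\|$, that is reused in the subsequent results (Theorem \ref{prop - centre J general} and Corollary \ref{lem - perm}), so the authors set it up here; for the lemma itself it is not needed, as your argument shows.
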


\begin{proof}
	We begin by showing $(i)$. Let $A\in\mathcal{R}$. 
	Since $f_A(D):=\|A-D\|$ is a convex real-valued function on $\ds$, it follows from property \ref{prop - conv-max} of doubly stochastic matrices in Section \ref{sec - prop} that the Chebyshev radius is given by
	\[
	R_{\|\cdot \|}(\ds) \,=\, \infp_{A\in \mathcal{R}} \sup_{D\in \ds} \|A-D\| \,=\, \inf_{A\in\mathcal{R}} \max_{P\in \Pn} \|A-P\|.
	\]
    Suppose that $A_0\in \mathcal{R}$ is a Chebyshev center of $\ds$. We thus have $R_{\|\cdot\|}(\ds) = \max_{P\in \Pn} \|A_0-P\|.$ 
	Since $\|\cdot\|$ is permutation-invariant, it follows that for any permutation matrices $Q_1$ and $Q_2$ of appropriate size, we have
	\[
	R_{\|\cdot \|}(\ds) \,=\, \max_{P\in \Pn} \|Q_1A_0Q_2-Q_1PQ_2\| \,=\, \max_{Q\in \Pn} \|Q_1A_0Q_2-Q\|.
	\]
	Hence, $Q_1 A_0 Q_2 \in \mathcal{R}$ is also a Chebyshev center of $\ds$ for any permutation matrices $Q_1$ and $Q_2$.
	
	Now, let us prove statement $(ii)$.
	Since $\mathcal{R}$ is a convex set, $\lambda A_1+(1-\lambda)A_2$ belong to $\mathcal{R}$ for any $0\leq \lambda \leq 1$. Thereby,
	\begin{align*}
		R_{\|\cdot \|}(\ds) \,&=\, \inf_{A\in\mathcal{R}} \max_{P\in \Pn} \|A-P\| \,\leq\, \max_{P\in \Pn} \| \lambda A_1+(1-\lambda)A_2 - P\| \\
		&\leq\, \lambda \max_{P\in \Pn} \|A_1-P\| + (1-\lambda)\max_{P\in\Pn}\|A_2-P\| \\
		&=\, \lambda R_{\|\cdot \|}(\ds) + (1-\lambda)R_{\|\cdot \|}(\ds) \\
		&=\, R_{\|\cdot \|}(\ds),
	\end{align*}
	since $\max_{P\in \Pn} \|A_1-P\|=\max_{P\in \Pn} \|A_2-P\|=R_{\|\cdot \|}(\ds)$. Therefore, in particular,  $R_{\|\cdot \|}(\ds) = \max_{P\in \Pn} \| \lambda A_1+(1-\lambda)A_2 - P\|$. Hence $\lambda A_1+(1-\lambda)A_2 $ is a Chebyshev center of $\ds$ relative the metric space $ (M_n(\mathbb{R}),\|\cdot\|)$ and the constraint set $\mathcal{R}$ for every $0\leq \lambda \leq 1$.
\end{proof}

From this, we derive the following useful theorem.

\begin{theorem}\label{prop - centre J general}
    Let $\mathcal{R} \subseteq M_n(\mathbb{R})$ be a convex permutation-invariant constraint set and let $\|\cdot\|$ be a permutation-invariant norm on $M_n(\mathbb{R})$. If there exist a Chebyshev center $A$ of $\ds$ relative to the metric space $(M_n(\mathbb{R}),\|\cdot\|)$ and the constraint set $\mathcal{R}$, then the matrix $J_nAJ_n =  \big(\frac{1}{n}\sum_{i,j=1}^n a_{ij} \big)J_n$ is also a Chebyshev center of $\ds$ relative to the aforementioned metric space and constraint set. Moreover, the Chebyshev radius of $\ds$ in this setting is given by
    \[
    R_{\|\cdot \|}(\ds) \,=\,\|J_nAJ_n-I_n\| \,=\, \inf_{\!\!\substack{\alpha\in\mathbb{R}\\ \alpha J_n \in \mathcal{R}}\!}\|\alpha J_n-I_n\|
    \vspace{-2pt}
    \]
    and the infimum is attained by $\alpha = \frac{1}{n}\sum_{i,j=1}^n a_{ij}$.
\end{theorem}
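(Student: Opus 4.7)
The plan is to combine both parts of Lemma \ref{lem - bonus1} with the identity $J_n = \frac{1}{n!}\sum_{P \in \Pn} P$ from Lemma \ref{lem - bonus3} in order to average the given Chebyshev center $A$ into a scalar multiple of $J_n$. Concretely, since $\mathcal{R}$ is permutation-invariant and $\|\cdot\|$ is permutation-invariant, Lemma \ref{lem - bonus1}(i) tells me that $PAQ$ is a Chebyshev center of $\ds$ lying in $\mathcal{R}$ for every $P, Q \in \Pn$. Convexity of $\mathcal{R}$ together with an iteration of Lemma \ref{lem - bonus1}(ii) then promotes any finite convex combination of such $PAQ$ to a Chebyshev center in $\mathcal{R}$. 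Applying this to the uniform convex combination and using Lemma \ref{lem - bonus3} twice, I would write
\[
J_n A J_n \,=\, \Bigl(\tfrac{1}{n!}\sum_{P\in\Pn} P\Bigr) A \Bigl(\tfrac{1}{n!}\sum_{Q\in\Pn} Q\Bigr) \,=\, \tfrac{1}{(n!)^2}\sum_{P,Q\in\Pn} PAQ,
\]
so that $J_n A J_n$ is a Chebyshev center. A direct entrywise computation $[J_n A J_n]_{ij} = \frac{1}{n^2}\sum_{k,\ell} a_{k\ell}$ confirms that $J_n A J_n = \alpha_0 J_n$ with $\alpha_0 = \frac{1}{n}\sum_{i,j} a_{ij}$, which in particular shows $\alpha_0 J_n \in \mathcal{R}$.

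For the radius, I would exploit the fact noted in the proof of Lemma \ref{lem - bonus1} that the supremum over $\ds$ reduces to a maximum over $\Pn$, combined with the key observation that $PJ_n = J_n P = J_n$ for every $P \in \Pn$. This gives, for any $\alpha \in \mathbb{R}$ with $\alpha J_n \in \mathcal{R}$ and any $P \in \Pn$,
\[
\|\alpha J_n - P\| \,=\, \|P^{-1}(\alpha J_n - P)\| \,=\, \|\alpha J_n - I_n\|,
\]
using the permutation invariance of $\|\cdot\|$. Hence $\max_{P \in \Pn}\|\alpha J_n - P\| = \|\alpha J_n - I_n\|$, independently of which permutation one picks. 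Applied to $\alpha = \alpha_0$, this yields $R_{\|\cdot\|}(\ds) = \|\alpha_0 J_n - I_n\| = \|J_n A J_n - I_n\|$.

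To finish the identification with the infimum, I would use the definition of $R_{\|\cdot \|}(\ds)$ as an infimum over $\mathcal{R}$: for every $\alpha$ with $\alpha J_n \in \mathcal{R}$,
\[
R_{\|\cdot\|}(\ds) \,\leq\, \max_{P\in\Pn}\|\alpha J_n - P\| \,=\, \|\alpha J_n - I_n\|,
\]
so $R_{\|\cdot\|}(\ds) \leq \inf_{\alpha J_n \in \mathcal{R}}\|\alpha J_n - I_n\|$. The reverse inequality follows since $\alpha_0 J_n \in \mathcal{R}$ realizes the bound, which simultaneously shows the infimum is attained at $\alpha_0$.

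The only subtlety I anticipate is making sure the two-term convex combination of Lemma \ref{lem - bonus1}(ii) really does extend to the $(n!)^2$-term average above; this is a straightforward induction, but worth stating explicitly since the collection of Chebyshev centers need not itself be convex a priori — it is convex only within $\mathcal{R}$, which is exactly why the permutation-invariance of $\mathcal{R}$ (giving $PAQ \in \mathcal{R}$) is essential before the averaging step.
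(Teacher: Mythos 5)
Your proposal is correct and follows essentially the same route as the paper's own proof: average the centers $PAQ$ via Lemma \ref{lem - bonus1} and identify the result with $J_nAJ_n=\alpha_0 J_n$ via Lemma \ref{lem - bonus3}, then use permutation invariance together with $J_nP=J_n$ to reduce $\max_{P\in\Pn}\|\alpha J_n-P\|$ to $\|\alpha J_n-I_n\|$ and sandwich the Chebyshev radius. Your explicit remark that Lemma \ref{lem - bonus1}(ii) must be iterated to handle the $(n!)^2$-term convex combination is a point the paper leaves implicit, but it is the same argument.
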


\begin{proof}
   Let $A$ be any Chebyshev center in the above settings. Then Lemma \ref{lem - bonus1} 
    implies that the convex combination $ \frac{1}{(n!)^2} \sum_{P,Q\in\Pn} PAQ$ is also a Chebyshev center. Therefore, an application of Lemma \ref{lem - bonus3} yield 
    \[
    \frac{1}{(n!)^2} \!\sum_{P,Q\in\Pn} \!\!PAQ = \left( \frac{1}{n!} \sum_{P\in\Pn} \!P\right)\! A  \left( \frac{1}{n!} \sum_{\smash{Q\in\Pn}} \!Q\right)\! = J_nAJ_n =  \left( \frac{1}{n}\sum_{i,j=1}^n a_{ij} \right)\!J_n.
    \]
   Thus, $J_nAJ_n$ is a Chebyshev center of $\ds$ relative to the metric space $ (M_n(\mathbb{R}),\|\cdot\|)$ and the constraint set $\mathcal{R}$.
    It then follows that $R_{\|\cdot \|}(\ds) = \max_{P\in\Pn} \|J_nAJ_n-P\|$. Consequently, using once again the fact that $\|\cdot\|$ is permutation-invariant and that $J_nP=J_n$ for any permutation matrix $P$, we get
    \[
    R_{\|\cdot \|}(\ds) = \max_{P\in \Pn} \|J_nAJ_nP^*-I_n\| = \max_{P\in \Pn} \|J_nAJ_n-I_n\| = \|J_nAJ_n-I_n\|.
    \]
       Finally, observe that $J_nAJ_n=\alpha J_n$ for some $\alpha\in\mathbb{R}$. Hence,
    \begin{align*}
        R_{\|\cdot \|}(\ds) \,&=\, \inf_{A'\in\mathcal{R}} \max_{P\in\Pn} \|A'-P\| \,\leq \inf_{\substack{\alpha\in\mathbb{R}\\ \alpha J_n \in \mathcal{R}}} \max_{P\in\Pn} \|\alpha J_n-P\| \\
        &= \inf_{\substack{\alpha\in\mathbb{R}\\ \alpha J_n \in \mathcal{R}}}\|\alpha J_n-I_n\| \,\leq\, \|J_nAJ_n-I_n\| \,=\, R_{\|\cdot \|}(\ds)
    \end{align*}
    and the conclusion follows directly.
\end{proof}

As a consequence of Theorem \ref{prop - centre J general}, we derive the following corollary which states that, under mild conditions, any Chebyshev center of $\ds$ is equidistant to every permutation matrix.

\begin{corollary}\label{lem - perm}
Let $\mathcal{R} \subseteq M_n(\mathbb{R})$ be a  convex permutation-invariant constraint set and let $\|\cdot\|$ be a permutation-invariant norm on $M_n(\mathbb{R})$. If there exist a Chebyshev center $A$ of $\ds$ relative to the metric space $(M_n(\mathbb{R}),\|\cdot\|)$ and the constraint set $\mathcal{R}$, then $\|A-P\|=R_{\|\cdot\|}(\ds)$ for any permutation matrix $P\in\Pn$.
\end{corollary}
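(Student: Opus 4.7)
The plan is to bootstrap from Theorem \ref{prop - centre J general}, which provides two key pieces of information. Setting $\alpha = \frac{1}{n}\sum_{i,j=1}^n a_{ij}$, the matrix $\alpha J_n = J_nAJ_n$ is itself a Chebyshev center of $\ds$, and $R_{\|\cdot\|}(\ds) = \|\alpha J_n - I_n\|$. Moreover, the proof of that theorem exhibits $\alpha J_n$ as the convex combination $\frac{1}{(n!)^2}\sum_{Q,R\in\Pn} QAR$ of Chebyshev centers, a representation I will exploit heavily.

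First I would verify the desired equidistance property directly for the special Chebyshev center $\alpha J_n$. Since every permutation matrix $P$ satisfies $PJ_n = J_n$ and the norm is permutation-invariant, one has $\|\alpha J_n - P\| = \|P(\alpha J_n - I_n)\| = \|\alpha J_n - I_n\| = R_{\|\cdot\|}(\ds)$ for every $P\in\Pn$. Thus $\alpha J_n$ is equidistant from all permutation matrices.

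Next I would propagate this equality back to the original Chebyshev center $A$ by a squeeze argument. By Lemma \ref{lem - bonus1}(i), each matrix $QAR$ with $Q,R\in\Pn$ is itself a Chebyshev center, so $\|QAR - P\|\le R_{\|\cdot\|}(\ds)$ for every $Q,R,P\in\Pn$. Fixing $P\in\Pn$ and applying the triangle inequality to the convex representation of $\alpha J_n$ produces the chain $R_{\|\cdot\|}(\ds) = \|\alpha J_n - P\| \le \frac{1}{(n!)^2}\sum_{Q,R\in\Pn}\|QAR - P\| \le R_{\|\cdot\|}(\ds)$. Every inequality is therefore an equality, and since each summand is bounded above by $R_{\|\cdot\|}(\ds)$ while their average saturates this bound, every summand must equal $R_{\|\cdot\|}(\ds)$. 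Choosing $Q = R = I_n$ gives $\|A - P\| = R_{\|\cdot\|}(\ds)$, which is the desired conclusion.

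There is no substantial obstacle; the crux is the \emph{squeeze argument} enabled by expressing $\alpha J_n$ as an average of the Chebyshev centers $QAR$. Without this averaging identity, one would only have the trivial upper bound $\|A - P\| \le R_{\|\cdot\|}(\ds)$ (immediate from $A$ being a Chebyshev center and $P\in\ds$), with no mechanism to upgrade it to an equality.
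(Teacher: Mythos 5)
Your proposal is correct and follows essentially the same route as the paper: both express the averaged center $J_nAJ_n=\frac{1}{(n!)^2}\sum_{Q,R\in\Pn}QAR$ as a convex combination of Chebyshev centers, note it is equidistant from all permutation matrices since $PJ_n=J_n$, and run the triangle-inequality squeeze to force every summand $\|QAR-P\|$ to equal $R_{\|\cdot\|}(\ds)$. The only (cosmetic) difference is at the end: you take $Q=R=I_n$ directly, while the paper passes through $\|PAQ-S\|=R_{\|\cdot\|}(\ds)$ and then uses permutation invariance to recover $\|A-P\|=R_{\|\cdot\|}(\ds)$.
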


\begin{proof}
Let $A\in\mathcal{R}$ be a Chebyshev center and let $S\in\Pn$ be any permutation matrix. Observe that, regardless of the choice of $S$, Theorem \ref{prop - centre J general} guarantees that
    \begin{align*}
        R_{\|\cdot\|}(\ds) = \|J_nAJ_n-I_n\| = \|J_nAJ_n-S\| = \bigg\| \frac{1}{(n!)^2} \!\sum_{\smash{P,Q\in\Pn}} \!\!PAQ - S \bigg\|,
    \end{align*}
where the third equality stem from Lemma \ref{lem - bonus3}. Now, Lemma \ref{lem - bonus1} guarantees that $PAQ$ is a Chebyshev center for all $P$ and $Q\in\Pn$ and thus
    \begin{align*}
        R_{\|\cdot\|}(\ds) \,&=\, \bigg\| \frac{1}{(n!)^2} \!\sum_{\smash{P,Q\in\Pn}} \!\!PAQ -S \bigg\| \,=\, \frac{1}{(n!)^2} \bigg\| \sum_{\smash{P,Q\in\Pn}} \!(PAQ - S) \bigg\| \\
        &\leq\, \frac{1}{(n!)^2} \!\sum_{P,Q\in\Pn} \!\!\left\| PAQ - S \right\| \,\leq\, \frac{1}{(n!)^2} \!\sum_{P,Q\in\Pn} \!\max_{R\in\Pn} \left\| PAQ - R \right\| \\
        &=\, \frac{1}{(n!)^2} \!\sum_{P,Q\in\Pn} \!\!R_{\|\cdot\|}(\ds) = R_{\|\cdot\|}(\ds).
    \end{align*}
Hence, all the above inequalities are in fact equalities and, in particular, we find that $\left\| PAQ - S \right\| = \max_{R\in\Pn}\left\| PAQ - R \right\| = R_{\|\cdot\|}(\ds)$ for any $P$ and $Q\in\Pn$. Since $\|\cdot\|$ is permutation-invariant, we conclude that $\|A-P^* SQ^*\|=R_{\|\cdot\|}(\ds)$ for every permutation matrices $P\in\Pn$. Hence $\|A-P\|=R_{\|\cdot\|}(\ds)$ for all $P\in\Pn$.
\end{proof}

Remark that all the above results are valid even if $\mathcal{R}$ has no direct relation to $\ds$. If we also make the natural assumption that $\mathcal{R}=\ds$, then we obtain the following concrete result.

\begin{corollary}\label{cor - centre J}
    If $\|\cdot\|$ is a permutation-invariant norm on $M_n(\mathbb{R})$, then the special doubly stochastic matrix $J_n$ is a Chebyshev center of $\ds$ relative to the metric space $ (\ds,\|\cdot\|)$. Moreover, the associated Chebyshev radius is given by $R_{\|\cdot\|}(\ds)=\|J_n-I_n\|$.
\end{corollary}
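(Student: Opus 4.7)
The plan is to derive this corollary as a direct specialization of Theorem \ref{prop - centre J general} with the constraint set $\mathcal{R}=\ds$. Three things need to be checked: that the hypotheses of the general theorem are satisfied for this choice, that a Chebyshev center exists in the first place, and that the formula $R_{\|\cdot\|}(\ds)=\|J_n-I_n\|$ is indeed what the theorem yields in this case.

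First I would verify the structural hypotheses. The Birkhoff polytope $\ds$ is convex (it is the convex hull of $\Pn$) and permutation-invariant (for $Q_1,Q_2\in\Pn$ and $D\in\ds$, the matrix $Q_1DQ_2$ is again doubly stochastic, since the defining constraints of $\ds$ are preserved by permuting rows and columns). So $\mathcal{R}=\ds$ satisfies the hypotheses on the constraint set in Theorem \ref{prop - centre J general}.

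Next, I would invoke Proposition \ref{prop - exist}(i): since $\ds$ is compact, there exists a Chebyshev center $A\in\ds$ of $\ds$ relative to $(M_n(\mathbb{R}),\|\cdot\|)$ with constraint set $\ds$. Now apply Theorem \ref{prop - centre J general}: the matrix $J_nAJ_n=\big(\tfrac{1}{n}\sum_{i,j}a_{ij}\big)J_n$ is also a Chebyshev center. But because $A$ is itself doubly stochastic, we have $\sum_{i,j}a_{ij}=n$, so that $J_nAJ_n=J_n$. Hence $J_n$ is a Chebyshev center, which proves the first statement.

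For the Chebyshev radius, the theorem gives
\[
R_{\|\cdot\|}(\ds) \,=\, \|J_nAJ_n-I_n\| \,=\, \|J_n-I_n\|.
\]
As a sanity check, one can also use the infimum characterization: $\alpha J_n\in\ds$ forces $\alpha=1$, so the infimum over admissible $\alpha$ reduces to the single value $\|J_n-I_n\|$. No step here is a real obstacle — the entire content was packed into Theorem \ref{prop - centre J general} — the only thing to watch is ensuring that existence of a Chebyshev center is not merely assumed but actually guaranteed, which is why Proposition \ref{prop - exist} is invoked at the outset.
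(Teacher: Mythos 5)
Your proof is correct and follows essentially the same route as the paper: existence via Proposition \ref{prop - exist} (compactness of $\ds$), then Theorem \ref{prop - centre J general} applied with $\mathcal{R}=\ds$, noting $J_nAJ_n=J_n$ because $A$ is doubly stochastic, which yields both the center and the radius $\|J_n-I_n\|$. The extra verifications you include (convexity and permutation-invariance of $\ds$, and the observation that $\alpha J_n\in\ds$ forces $\alpha=1$) are fine but not needed beyond what the paper records.
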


\begin{proof}
    Since $\ds$ is compact, Proposition \ref{prop - exist} ensures the existence of a Chebyshev center $D\in\ds$. But Theorem \ref{prop - centre J general} implies that $J_nDJ_n$ is also a Chebyshev center. Since $J_nDJ_n=J_n$ for any $D\in\ds$, it follows that $J_n$ is a Chebyshev center of $\ds$ in these settings and that the Chebyshev radius is equal to $\|J_n-I_n\|$.
\end{proof}


\subsection{The operator norms from \texorpdfstring{$\ell^p_n$}{ℓp} to \texorpdfstring{$\ell^p_n$}{ℓp} for \texorpdfstring{$1 \leq p \leq \infty$}{1≤p≤∞}}
\label{sec - radius2}

With the above tools in hands, we now seek to study the Chebyshev radius $R_p(\ds)$ and the Chebyshev center of $\ds$ relative to the metric space $(\ds,\|\cdot\|_{\lp})$. We begin by deriving a technical lemma which will play a key role in showing the unicity of the Chebyshev center relative to the constraint set $\ds$.

\begin{lemma}\label{lem - A-J_v.p.}
	Consider a matrix $D\in \ds$ with eigenvalues $1=\lambda_1,\lambda_2,\dots,\lambda_n \in \C$. Then the eigenvalues of $D-J_n$ are $0, \lambda_2, \dots, \lambda_n$.
\end{lemma}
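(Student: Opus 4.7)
The plan is to exploit the invariance of the span of $e=(1,\dots,1)^{\intercal}$ under both $D$ and $D^{\intercal}$ (this is where the double stochasticity, rather than mere row stochasticity, will be essential) to simultaneously put $D$ and $J_n$ in a block‑diagonal form, from which the eigenvalues of $D-J_n$ can be read off directly.

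More concretely, I would first record two elementary facts. Since $D$ is doubly stochastic, $De=e$ and $e^{\intercal}D=e^{\intercal}$, so both $\operatorname{span}(e)$ and its orthogonal complement $e^{\perp}$ are invariant under $D$. On the other hand, $J_n = \tfrac{1}{n}ee^{\intercal}$, so $J_n$ acts as the identity on $\operatorname{span}(e)$ and as zero on $e^{\perp}$. Writing the decomposition $\mathbb{C}^n = \operatorname{span}(e)\oplus e^{\perp}$, both $D$ and $J_n$ are block‑diagonal; in fact, if $U$ is any unitary matrix whose first column is $\tfrac{1}{\sqrt n}e$, then a direct check gives
\[
U^{*}DU = \begin{pmatrix}1 & 0 \\ 0 & D'\end{pmatrix}, \qquad U^{*}J_nU = \begin{pmatrix}1 & 0 \\ 0 & 0\end{pmatrix},
\]
where $D'$ is an $(n-1)\times(n-1)$ matrix.

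Subtracting gives
\[
U^{*}(D-J_n)U = \begin{pmatrix} 0 & 0 \\ 0 & D' \end{pmatrix},
\]
so the spectrum of $D-J_n$ consists of $0$ together with the spectrum of $D'$. Since similarity preserves the spectrum and since $U^{*}DU$ is also block‑diagonal with blocks $1$ and $D'$, the eigenvalues of $D'$ are exactly the eigenvalues of $D$ other than the $\lambda_1=1$ contributed by the $\operatorname{span}(e)$ factor, namely $\lambda_2,\dots,\lambda_n$. This yields the claim.

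The only mild subtlety — and what I would flag as the step deserving care — is that one should not try to argue via a basis of eigenvectors, because $D$ need not be diagonalizable in general. The block‑diagonalization above sidesteps this issue entirely by working with invariant subspaces rather than eigenvectors, so the argument goes through without any diagonalizability assumption on $D$.
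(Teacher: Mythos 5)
Your proof is correct, and it reaches the block structure by a different mechanism than the paper. The paper's proof notes that $D$ and $J_n$ commute and invokes the simultaneous unitary (Schur) triangularization of this commuting pair (Theorem 2.3.3 in Horn--Johnson), with the common eigenvector $e/\sqrt{n}$ placed as the first column of $U$; the eigenvalues of $D-J_n$ are then read off the diagonal of the difference of two triangular matrices. You instead avoid both commutativity and triangularization: using $De=e$ together with $e^{\intercal}D=e^{\intercal}$ (this is where double stochasticity enters for you, whereas the paper uses it through $DJ_n=J_nD=J_n$), you observe that $\operatorname{span}(e)$ and $e^{\perp}$ are both $D$-invariant, so conjugation by any unitary $U$ with first column $e/\sqrt{n}$ puts $D$ in block-diagonal form $1\oplus D'$, while $U^{*}J_nU=e_1e_1^{\intercal}$ exactly. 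This is more elementary and self-contained (no external theorem needed), yields a genuinely block-diagonal rather than merely triangular form, and makes explicit that the $0$ eigenvalue of $D-J_n$ is the one carried by $e$; what the paper's route buys is only the cosmetic extra that $\lambda_2,\dots,\lambda_n$ can be displayed along a triangular diagonal, which is not needed for the conclusion. Your closing identification of the spectrum of $D'$ with $\{\lambda_2,\dots,\lambda_n\}$ is justified (as multisets) by factoring the characteristic polynomial, $\det(xI_n-D)=(x-1)\det(xI_{n-1}-D')$, and your remark that the argument never needs diagonalizability of $D$ is well taken.
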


\begin{proof}
Observe the following:
\begin{enumerate}[itemindent=20pt,label=(\roman*)]
    \item The matrices $D$ and $J_n$ commute;
    \item The eigenvalue $\lambda_1=1$ of $D$ is associated with the all-ones eigenvector $e$;
    \item The eigenvalues of $J_n$ are $1$ with multiplicity 1 and associated with the eigenvector $e$, and 0 with multiplicity $n-1$.
\end{enumerate}
Theorem 2.3.3 in \cite{HornJohnson2013} implies that there exist a unitary matrix $U$ whose first column is $\tfrac{1}{\sqrt{n}} e$ and such that
\[
U^* D U \,=\, \begin{bmatrix}
    1 & 0 &\cdots & \!0 \\
    * & \!\!\lambda_2\!\! & \cdots & \!0 \\
    \vdots & \vdots & \ddots &\vdots \\
    * & * & \cdots & \!\lambda_n 
\end{bmatrix}
~\quad\& ~\quad
U^* J_n U \,=\, \begin{bmatrix}
    1 & 0 &\cdots & 0 \\
    * & 0 & \cdots & 0 \\
    \vdots & \vdots & \ddots &\vdots \\
    * & * & \cdots & 0 
\end{bmatrix}.
\]
It thus follows that
\[
U^*(D-J_n) U \,=\, \begin{bmatrix}
    1 & 0 &\cdots & \!0 \\
    * & \!\!\lambda_2\!\! & \cdots & \!0 \\
    \vdots & \vdots & \ddots &\vdots \\
    * & * & \cdots & \!\lambda_n 
\end{bmatrix} - \begin{bmatrix}
    1 & 0 &\cdots & 0 \\
    * & 0 & \cdots & 0 \\
    \vdots & \vdots & \ddots &\vdots \\
    * & * & \cdots & 0 
\end{bmatrix} \,=\, \begin{bmatrix}
    0 & 0 &\cdots & \!0 \\
    * & \!\!\lambda_2\!\! & \cdots & \!0 \\
    \vdots & \vdots & \ddots &\vdots \\
    * & * & \cdots & \!\lambda_n 
\end{bmatrix}.
\]
The conclusion directly follows.
\end{proof}

We know from Corollary \ref{cor - centre J} that the Chebyshev radius of $\ds$ relative to the metric space $(\ds,\|\cdot\|_{\lp})$ is given by $\|J_n-I_n\|_{\lp}$. Motivated by this result, the operator norm from $\ell^p_n$ to $\ell^p_n$ of the matrices of the form $\alpha I_n + \beta J_n$ was studied and partial results were presented in \cite{bouthat2}. In particular, it was showed that, 
\begin{align}\label{prop - RIMS}
	\|J_n-I_n\|_{\lp} \,\geqslant\, 1, \quad\quad (1 < p < \infty),
\end{align}
with equality if and only if either $n=2$ or $p=2$. Moreover, in the very special case of $n=2$, we have
\begin{align}\label{n=2}
	\begin{split}
		R_{\|\cdot\|}(\Omega_2) \,&=\, \min_{D\in \ds} \max_{P\in \Pn} \|D-P\| \\&=\, \min_{0\leq a \leq 1} \max\left\{ \left\| \left[\begin{smallmatrix}
			a-1 & 1-a\\
			1-a & a-1
		\end{smallmatrix}\right]
		\right\| , \left\| \left[\begin{smallmatrix}
			a & -a\\
			-a & a
		\end{smallmatrix}\right]
		\right\|  \right\} \\
		&=\, \min_{0\leq a \leq 1} \max\left\{ |1-a| \left\| \left[\begin{smallmatrix}
			1 & -1\\
			-1 & 1
		\end{smallmatrix}\right]
		\right\| , |a|\left\| \left[\begin{smallmatrix}
			1 & -1\\
			-1 & 1
		\end{smallmatrix}\right]
		\right\|  \right\} \\
		&=\, \min_{0\leq a \leq 1} \max\left\{ |1-a| , |a|  \right\}  \left\| \left[\begin{smallmatrix}
			1 & -1\\
			-1 & 1
		\end{smallmatrix}\right]
		\right\| \\
		&=\, \tfrac{1}{2}  \left\| \left[\begin{smallmatrix}
			1 & -1\\
			-1 & 1
		\end{smallmatrix}\right]
		\right\|,
	\end{split}
\end{align}
where $\|\cdot\|$ could be any matrix norm. Since the last equality is satisfied if and only if $a=\tfrac{1}{2}$, we deduce that the special doubly stochastic matrix $J_n$ is the unique Chebyshev center of $\ds$ when $n=2$ relative to any matrix norm when the constraint set is $\ds$.

As for the special case of $p=2$, often refered as the \emph{spectral norm}, recall that we have the alternative definition
\begin{equation}
    \|A\|_{\lp[2]} \,=\, \sigma_1(A),
\end{equation}
where $\sigma_1(A) \geq \sigma_2(A) \geq \dots \geq \sigma_n(A) \in \mathbb{R}$ are the \emph{singular values} of $A$, always arranged in decreasing order. The singular values of $A$ are defined as the square root of the eigenvalues of the matrix $A^*A$. 
Related to the spectral norm and useful in the proof of the following result is the \emph{Frobenius norm}. It is defined as the $2$-norm of the singular values of a matrix, that is
\vspace{-3pt}
\[
\|A\|_\F \,:=\, \left( \sum_{i=1}^n \sigma_i^2(A) \right)^{\!\frac{1}{2}} .
\]

Now, using the previous two results along with Lemma \ref{lem - A-J_v.p.}, we can establish the following theorem.

\begin{theorem}
For $1\leq p \leq \infty$, the special doubly stochastic matrix $J_n$ is the unique Chebyshev center of $\ds$ relative to the metric space $(\ds,\|\cdot\|_{\lp})$. Moreover, for $p=1, \infty$, the Chebyshev radius of $\ds$ is equal to $2\big(1-\tfrac{1}{n}\big)$ and for $p=2$, it is equal to $1$.
\end{theorem}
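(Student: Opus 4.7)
The plan is to use the general machinery already developed to pin down $J_n$ as a Chebyshev center, compute the three explicit radii, and then establish uniqueness by a case analysis in $p$. Since $\|\cdot\|_{\lp[p]}$ is permutation-invariant and $\ds$ is convex and permutation-invariant, Corollary \ref{cor - centre J} immediately gives that $J_n$ is a Chebyshev center with $R_p=\|J_n-I_n\|_{\lp[p]}$. For $p=1$, Theorem \ref{thm - Psi_1-Psi_infty} applied to $D=J_n$ yields $r_1(J_n)=2(1-1/n)=R_1$, and the case $p=\infty$ follows by transpose duality since $J_n-I_n$ is symmetric. For $p=2$, Lemma \ref{lem - A-J_v.p.} with $D=I_n$ shows that $I_n-J_n$ is symmetric with eigenvalues $0,1,\dots,1$, so its spectral norm is $1$.

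Now let $A\in\ds$ be any Chebyshev center; by Corollary \ref{lem - perm} we have $\|A-P\|_{\lp[p]}=R_p$ for every $P\in\Pn$. For $p=1$ (and $p=\infty$ by duality), Theorem \ref{thm - Psi_1-Psi_infty} reads $r_1(A)=2(1-\min_{i,j} a_{ij})$, so the identity $r_1(A)=2(1-1/n)$ forces $\min_{i,j} a_{ij}=1/n$; combined with the row-sum constraint, this implies every entry of $A$ equals $1/n$, so $A=J_n$. For $p=2$, each matrix $(A-P)(A-P)^T$ is positive semidefinite with spectral norm $\|A-P\|_{\lp[2]}^2=1$, hence $(A-P)(A-P)^T\preceq I_n$. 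Averaging over $P\in\Pn$ and using the identities $AJ_n=J_nA^T=J_n$ together with $\sum_{P\in\Pn} P=n!\,J_n$ yields $AA^T-2J_n+I_n\preceq I_n$, i.e.\ $AA^T\preceq 2J_n$. Since $2J_n$ vanishes on $e^\perp$, every $v\perp e$ satisfies $A^T v=0$; combined with $A^T e=e$, this forces $A^T=J_n$ and hence $A=J_n$.

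For the remaining range $1<p<\infty$ with $p\neq 2$, I would exploit the strict convexity of $\ell^p_n$. Suppose $A\neq B$ were both Chebyshev centers; then by Lemma \ref{lem - bonus1}(ii) their midpoint $C=\tfrac{A+B}{2}$ is also one. For each $P\in\Pn$ pick $x_P\in\mathbb{R}^n$ with $\|x_P\|_p=1$ and $\|(C-P)x_P\|_p=R_p$. Since $\|(A-P)x_P\|_p,\|(B-P)x_P\|_p\leq R_p$ and their average equals $R_p$, both must equal $R_p$; strict convexity of $\ell^p_n$ then forces $(A-P)x_P=(B-P)x_P$, i.e.\ $(A-B)x_P=0$. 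The main obstacle is to promote this pointwise kernel information into the matrix identity $A=B$: one must show that, as $P$ varies over $\Pn$, the family $\{x_P\}$ together with the automatic kernel direction $e$ spans $\mathbb{R}^n$. I would attack this by exploiting the conjugation action of $\Pn$, which maps a norming direction for $C-P$ to one for $QCQ^{-1}-QPQ^{-1}$, thereby generating a sufficiently rich orbit of vectors in $\ker(A-B)$.
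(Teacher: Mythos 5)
Your existence argument, the three radius computations, and the uniqueness proofs for $p=1,\infty$ are correct and essentially coincide with the paper's. Your $p=2$ uniqueness argument, however, is a genuinely different route: the paper passes through the Frobenius norm (bounding $\max_{P}\|D_0-P\|_\F^2\leq n-1$ via the extra zero singular value and then invoking strict convexity of $\|\cdot\|_\F$ together with Proposition \ref{prop - uniqueness}), whereas you average the operator inequalities $(A-P)(A-P)^\intercal\preceq I_n$ over $\Pn$ to get $AA^\intercal\preceq 2J_n$, hence $A^\intercal v=0$ for $v\perp e$ and $A^\intercal e=e$, so $A=J_n$. That computation checks out and is arguably more direct, needing only Lemma \ref{lem - bonus3} and the absorbing property of $J_n$.

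The genuine gap is the case $1<p<\infty$, $p\neq 2$. Your strict-convexity step only yields $(A-B)x_P=0$ for the particular norming vectors $x_P$, and you explicitly leave open how to upgrade this to $A=B$; the sketched fix does not close it. Conjugation produces nothing new: if $x_P$ norms $C-P$, then $Qx_P$ norms $QCQ^\intercal-QPQ^\intercal$, whose associated centers are $QAQ^\intercal$ and $QBQ^\intercal$, and the resulting relation $Q(A-B)Q^\intercal(Qx_P)=0$ is just $x_P\in\ker(A-B)$ again. Nor is there a soft reason the vectors $x_P$ must span $\mathbb{R}^n$: taking $B=J_n$ (which one may, since $J_n$ is a center), a competitor of the form $A=J_n+uv^\intercal$ with $u,v\perp e$ and small entries is doubly stochastic and has $\ker(A-J_n)=v^\perp$, an $(n-1)$-dimensional hyperplane containing $e$, so the kernel condition alone cannot force $A=J_n$. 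The paper closes this case with a quantitative input that your proposal never uses: combining the kernel relation with Lemma \ref{lem - A-J_v.p.} (applied with $P=I_n$), the norming vector is either a multiple of $e$, forcing $R_p(\ds)=0$, or satisfies $Dx_0=J_nx_0=0$, forcing $R_p(\ds)=\|x_0\|_p=1$; this contradicts the strict inequality $\|J_n-I_n\|_{\lp}>1$ for $n\neq 2$, $p\neq 2$ recorded in \eqref{prop - RIMS} (with the case $n=2$ handled separately by \eqref{n=2}). Without some such lower bound on $\|J_n-I_n\|_{\lp}$, your argument for $p\notin\{1,2,\infty\}$ remains incomplete.
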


\begin{proof}
Without loss of generality, since $\|A\|_{\lp[1]} = \|A^*\|_{\lp[\infty]}$, it suffices to consider the cases $1\leq p \leq 2$. We further separate the proof in three parts.

\smallskip
\noindent {\bf Case 1: $\bm{p=1}$.}
First observe that Corollary \ref{cor - centre J} guarantees that $R_1(\ds)=\|J_n-I_n\|_{\lp[1]}$ and that $J_n$ is a Chebyshev center of $\ds$ in these settings. As $\|A\|_{1}=\max_{1\leq j\leq n}\sum_{i=1}^{n}|a_{ij}|$, it is therefore a matter of simple computation to verify that $R_1(\ds)=2\big(1-\tfrac{1}{n}\big)$.
To establish the unicity, let us suppose that $D\in\ds$ is a Chebyshev center of $\ds$ in these settings. Then we have
\[
2\left(1-\tfrac{1}{n}\right) = R_1(\ds) = \max_{\smash{P\in\Pn}} \|D-P\|_{\lp[1]} = r_1(D) = 2\Big(1-\min_{\smash{i,j}} d_{ij}\Big),
\]
where the last equality comes from Theorem \ref{thm - Psi_1-Psi_infty}. Thus, we must have $\min_{i,j} d_{ij}=\frac{1}{n}$ and it then easily follows from the fact that $\sum_{j=1}^n d_{ij}=1$ that this is verified if and only if $D=J_n$.

\smallskip
\noindent {\bf Case 2: $\bm{p=2}$.}
We know from Corollary \ref{cor - centre J} and \eqref{prop - RIMS} that $R_2(\ds)=\|J_n-I_n\|_{\lp[2]}=1$ and that $J_n$ is a Chebyshev center in these settings. To establish the unicity, suppose that $D_0\in\ds$ is a Chebyshev center of $\ds$. Observe that, since $D_0\in \ds$, we have $(D_0-P)^*(D_0-P)e=0$ and thus $0$ is always a singular value of $D_0-P$, for any $P\in\Pn$. Hence, we have 
\begin{align}
    R_\F^2(\ds) \,&=\, \inf_{D\in\ds} \max_{P\in \Pn} \|D-P\|_\F^2\,\leq\, \max_{P\in \Pn} \|D_0-P\|_\F^2  \,=\, \max_{P\in\Pn} \sum_{i=1}^{n-1} \sigma_i^2 (D_0-P) \label{eq - Frob}\\
    &\leq\,  \max_{P\in\Pn} (n-1) \max_{1\leq i \leq n}\sigma_i^2 (D_0-P) \,=\, (n-1) \max_{P\in\Pn}\|D_0-P\|_{\lp[2]}^2 \nonumber\\
    &=\, n-1. \nonumber
\end{align}
Since the Frobenius norm is permutation-invariant and strictly convex \cite{MR4272466}, Corollary \ref{cor - centre J} and Proposition \ref{prop - uniqueness} ensure that $J_n$ is the \emph{unique} Chebyshev center of $\ds$ relative to the Frobenius norm and that $R_\F^2(\ds)=\|J_n-I_n\|_\F^2=n-1$. Hence, the inequalities in \eqref{eq - Frob} are in fact equalities and in particular, we have $\inf_{D\in\ds} \max_{P\in \Pn} \|D-P\|_\F^2 = \max_{P\in \Pn} \|D_0-P\|_\F^2$. Therefore, $D_0$ is a Chebyshev center of $\ds$ relative to the Frobenius norm and it follows that $D_0=J_n$. Hence, $J_n$ is the unique Chebyshev center of $\ds$ relative to the operator norm from $\ell^2$ to $\ell^2$.

\smallskip
\noindent {\bf Case 3: $\bm{1<p<2}$.}
Given a fixed $1<p<2$, let $D\in\ds$ be a Chebyshev center relative to the metric space $(\ds,\|\cdot\|_{\lp})$. Then $\lambda D+ (1-\lambda)J_n \in \ds$ is also a Chebyshev center for any $0\leq \lambda \leq 1$, and thus
\begin{align*}
    R_p(\ds) \,&=\, \|\lambda D+ (1-\lambda)J_n-P\|_{\lp} \\&=\, \max_{\|x\|_p=1} \frac{\|(\lambda D+ (1-\lambda)J_n-P)x\|_p}{\|x\|_p} \\
    &=\, \|(\lambda D+ (1-\lambda)J_n-P)x_0\|_p,
\end{align*}
where $x_0$ denotes an element for which the maximum is realized. But
 \begin{align*}
    \|(\lambda D+ (1-\lambda)J_n-P)x_0\|_p  \,&\leq\, \lambda\|(D-P)x_0\|_p +(1-\lambda)\|(J_n-P)x_0\|_p \\
    &\leq\, \lambda R_p(\ds) + (1-\lambda) R_p(\ds) \\&=\, R_p(\ds).
\end{align*}
As a result, the above inequalities are in fact equalities. We know that the former is saturated if and only if $(D-P)x_0=\alpha (J_n-P)x_0$ for some scalar $\alpha>0$ while the latter is saturated if and only if $\|(D-P)x_0\|_p=R_{p}(\ds)=\|(J_n-P)x_0\|_p$. Combining both conditions, we conclude that $\alpha=1$. It thus follows that $(D-P)x_0=(J_n-P)x_0$, i.e., that $Dx_0=J_nx_0 = \overline{x_0}e$, where $\overline{x}:=\frac{x_1+x_2+\cdots+x_n}{n}$. In particular, $(D-J_n)x_0=0$. Thus, two cases arises: either $D=J_n$, or else $D\neq J_n$ and $x_0$ is an eigenvector of $D-J_n$ associated with a null eigenvalue. In the latter case, Lemma \ref{lem - A-J_v.p.} implies that the eigenvalues of $D-J_n$ are a simple $0$ (associated with the eigenvector $e$ and its scalar multiple) as well as the $n-1$ eigenvalues of $D$ which are not associated with the eigenvector $e$. Hence, we either have $x_0=e/\|e\|_p$ or $J_n x_0=Dx_0=0$. In the first scenario, i.e., if $x_0 = e$, we have
\[
R_p(\ds) \,=\, \|(\lambda D+ (1-\lambda)J_n-I_n)x_0\|_p  \,=\, \frac{\|0\|_p}{\|e\|_p} \,=\, 0,
\]
which is impossible. In the second, namely if $J_n x_0=Dx_0=0$, then
\begin{equation}\label{eq - norm1}
    R_p(\ds) \,=\, \|(\lambda D+ (1-\lambda)J_n-I_n)x_0\|_p  \,=\, \|-x_0\|_p \,=\, 1.
\end{equation}
If $n=2$ or $p=2$, then \eqref{n=2} and Case 2 respectively ensures that $J_n$ is the unique Chebyshev center. But if $n\neq 2$ and $p\neq 2$, then \eqref{prop - RIMS} and \eqref{eq - norm1} imply that $1 = R_p(\ds) = \|J_n-I_n\|_{\lp}>1$, a contradiction. Therefore, it follows that $D=J_n$ and this completes the proof.
%
%
\end{proof}

As we have seen above, for $p=1$ and $\infty$, one can determine the Chebyshev radius of $\ds$ relative to the metric space $(\ds,\|\cdot\|_{\lp})$. However, for every other parameter $p$ (with the notable exception of $p =2$), the question remains open. In fact, we have seen that, since the operator norms from $\ell^p_n$ to $\ell^p_n$ are permutation-invariant, $J_n$ is the unique Chebyshev center of $\ds$ in these settings and that the Chebyshev radius is equal to $\|J_n-I_n\|_{\lp}$. However, the value for this norm is hard to compute for a generic $p$. In recent years, much effort has been put on this problem.

For instance, in  \cite{bouthat1} and \cite{bouthat2}, the following lower and upper bounds for the Chebyshev radius were obtained:
\begin{align}\label{eq - borne}
    1 \,\leqslant\, R_p(\ds) \,\leqslant\, \left(\frac{2(n-1)}{n}\right)^{\left|\frac{2}{p}-1\right|}\!\!, ~\quad (1 < p < \infty).
\end{align}
Observe these bounds coincide if $p = 2$.

In \cite{bouthat2}, it was also proved that, if $n=3$, then
\[
R_p(\Omega_3) = \|J_3-I_3\|_{\lp} = \frac{\left(2^{p-1}+1\right)^{\frac{1}{p}}\big(2^{\frac{1}{p-1}}+1\big)^{1-\frac{1}{p}}}{3}, ~\quad (1< p < \infty).
\]

Moreover, even though we were not able to provide a proof of this result, we proposed the following conjecture.

\begin{conjecture}\label{conj}
    For $1< p < \infty$ and $p\neq 2$, define $\rho:=p-1$. Let $x_p$ be the unique root of the function
    \[
    x ~\longmapsto~ 
    \rho\left(1+x^{\frac{1}{\rho}}\right)\!\left(1-x^{\rho-1}\right)+\left(1+x^{\rho}\right)\!\left(1-x^{\frac{1}{\rho}-1}\right)
    \]
    in the closed interval $[0,1]$. If $m_1:=\big\lfloor \frac{n}{x_{p}+1} \big\rfloor$ and $m_2:=\big\lceil \frac{n}{x_{p}+1} \big\rceil$, then
    \[
    R_p(\ds) = \|J_n-I_n\|_{\lp} \,=  \max_{m \in  \{m_1,m_2\}}\!\! \frac{\left(\left(\frac{n}{m}-1\right)^{p-1}+1\right)^{\!\frac{1}{p}} \!\left(\left(\frac{n}{m}-1\right)^{\frac{1}{p-1}}+1\right)^{\!1-\frac{1}{p}}}{\frac{n}{m}}.
    \]
\end{conjecture}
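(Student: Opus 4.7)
The plan is to compute $\|J_n-I_n\|_{\lp}$ explicitly by turning it into a solvable variational problem. Since $(J_n-I_n)x$ has $i$-th coordinate $\bar{x}-x_i$ with $\bar{x}:=\frac{1}{n}\sum_{j=1}^n x_j$, one has
\[
\|J_n-I_n\|_{\lp}^p \,=\, \max_{\|x\|_p=1}\, \sum_{i=1}^n |x_i-\bar{x}|^p.
\]
My approach proceeds in three stages: first, reduce to optimizers taking only two distinct values of opposite sign; second, carry out the resulting two-parameter maximization in closed form to obtain a one-parameter family $F(m)$; and third, analyze $F$ on a continuous parameter to identify its unique interior maximizer, which then forces the integer maximizer to lie in $\{m_1,m_2\}$.

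The main obstacle is the two-valued reduction. Lagrange multipliers applied to the constrained problem yield that every interior extremum satisfies $f(x_j-\bar{x})-\lambda f(x_j) = S/n$ for all $j$, where $f(t):=|t|^{p-2}t$, $S:=\sum_i f(x_i-\bar{x})$, and $\lambda$ is the Lagrange multiplier. Hence $g(y):=f(y-\bar{x})-\lambda f(y)$ takes the same value at every coordinate $x_j$. Its derivative $g'(y)=(p-1)(|y-\bar{x}|^{p-2}-\lambda|y|^{p-2})$ vanishes precisely where $|y-\bar{x}|/|y|=\lambda^{1/(p-2)}$, an equation with at most two real solutions; thus $g$ has at most three monotone pieces, and any critical $x$ takes at most three distinct coordinate values. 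A perturbation argument, in which a hypothetical third value is redistributed toward the other two while preserving $\sum|x_i|^p=1$, would then complete the reduction to exactly two values of opposite sign.

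Granted the two-valued structure, write $m$ coordinates equal to $s>0$ and $n-m$ equal to $-t<0$. Then $\bar{x}=(ms-(n-m)t)/n$, and a direct computation gives
\[
\sum_i|x_i-\bar{x}|^p \,=\, \frac{(s+t)^p\, m(n-m)\bigl[m^{p-1}+(n-m)^{p-1}\bigr]}{n^p}, \qquad \sum_i|x_i|^p \,=\, ms^p+(n-m)t^p.
\]
Fixing $m$ and $s+t$ and minimizing the denominator over $s/t$ yields the interior critical ratio $s/t=\bigl((n-m)/m\bigr)^{1/(p-1)}$; substituting back produces, after simplification,
\[
F(m) \,=\, \frac{\bigl((n/m-1)^{p-1}+1\bigr)^{1/p}\bigl((n/m-1)^{1/(p-1)}+1\bigr)^{1-1/p}}{n/m}.
\]

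Finally, I would substitute $y=n/m-1$ and set $G(y):=F(m)$. Differentiating $\log G$, clearing denominators, and using the identity $y^{\alpha-1}(y+1)-(y^{\alpha}+1)=y^{\alpha-1}-1$ applied with $\alpha=\rho$ and $\alpha=1/\rho$, the critical equation $(\log G)'(y)=0$ reduces exactly to
\[
\rho(1+y^{1/\rho})(1-y^{\rho-1}) + (1+y^{\rho})(1-y^{1/\rho-1}) \,=\, 0, \qquad \rho=p-1.
\]
Because $G$ is invariant under $y\leftrightarrow 1/y$ and satisfies $G(0^+)=G(+\infty)=G(1)=1$, for $p\neq 2$ it admits a unique critical maximum $x_p\in(0,1)$ together with its reflection $1/x_p\in(1,\infty)$. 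The integer maximizer of $F$ must therefore be one of the integers closest to the continuous maximizer $n/(x_p+1)$, namely $m_1$ or $m_2$, and the claimed formula follows.
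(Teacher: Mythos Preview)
This statement is labeled a \emph{conjecture} in the paper, and the authors say explicitly that they ``were not able to provide a proof of this result.'' There is therefore no paper proof to compare against; what you have written is an attempted proof of an open problem.

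Your outline contains genuine content. The Lagrange computation is correct: writing $f(t)=|t|^{p-2}t$ and $S=\sum_i f(x_i-\bar x)$, stationarity does give $f(x_j-\bar x)-\lambda f(x_j)=S/n$ for every $j$, and the level-set argument for $g(y)=f(y-\bar x)-\lambda f(y)$ legitimately bounds the number of distinct coordinate values by three. The two-parameter optimization once the two-valued structure is granted is also carried out correctly, and your reduction of $(\log G)'=0$ to the equation in the conjecture checks out (using $(y+1)y^{\alpha-1}=(y^{\alpha}+1)+(y^{\alpha-1}-1)$ with $\alpha=\rho$ and $\alpha=1/\rho$, the $(y^{\rho}+1)(y^{1/\rho}+1)$ terms cancel because $\rho+1-p=0$). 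The symmetry $G(y)=G(1/y)$ is correct and, together with $F(m)=F(n-m)$, does reduce the location of the integer maximum to $\{m_1,m_2\}$ \emph{provided} $F$ is unimodal on $(0,n/2]$.

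The decisive gap is the step you yourself flag: going from ``at most three distinct values'' to ``exactly two, of opposite sign.'' You write that ``a perturbation argument \ldots\ would then complete the reduction,'' but no such argument is given, and this is not a detail---it is the heart of the difficulty and presumably the reason the authors left the statement as a conjecture. Concretely, you must rule out both three-valued stationary configurations and boundary/non-smooth extrema (for $1<p<2$ the map $t\mapsto|t|^{p-2}t$ is not differentiable at $0$, so coordinates with $x_j=0$ or $x_j=\bar x$ are not covered by your first-order conditions). A second, smaller gap is the global shape of $G$: you assert unimodality on $(0,1)$ from the single interior critical point $x_p$, but note that $y=1$ is also a root of the critical equation, so on $(0,\infty)$ the function has the three critical points $x_p<1<1/x_p$; you need to argue (e.g.\ via $G(0^+)=G(1)=1$ together with the known strict inequality $\|J_n-I_n\|_{\lp}>1$ for $p\neq 2$, $n\geq 3$) that $x_p$ is a strict maximum and $y=1$ a local minimum, so that $F$ restricted to integers indeed peaks at $\lfloor n/(x_p+1)\rfloor$ or $\lceil n/(x_p+1)\rceil$. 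Until the two-valued reduction is actually proved, the argument remains a heuristic rather than a proof.
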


Note that the above conjecture is in fact a particular case of a broader conjecture stated in \cite{bouthat2}.

\section{Concluding Remarks}

To conclude this paper, we state a few remarks and set out some open questions that appear to be of interest.

\begin{enumerate}[label=(\roman*)]



    \item While the matrix $J_n$ is the Chebyshev center of $\ds$, it is also the \emph{barycenter} of the set of permutation matrices, i.e., it is the arithmetic mean of all the $n\times n$ permutation matrices (see Lemma \ref{lem - bonus3}). Under which conditions the Chebyshev center of the convex hull of a finite set of matrices $\mathcal{M}$ coincide with the barycenter of the set $\mathcal{M}$?

    \item Is Conjecture \ref{conj} valid for every $n\geq4$?
\end{enumerate}

\bibliographystyle{plain}
\bibliography{Part1_Operator}

\end{document}